\let\footnote=\endnote
\newcommand{\norm}[1]{\left\|#1\right\|}
\patchcmd\Gread@eps{\@inputcheck#1 }{\@inputcheck"#1"\relax}{}{}
\begin{document}


\RUNAUTHOR{Bertsimas, Pauphilet and Van Parys}

\RUNTITLE{Sparse Classification: A Scalable Discrete Optimization Perspective}

\ARTICLEAUTHORS{%

\AUTHOR{Dimitris Bertsimas}
\AFF{Sloan School of Management and Operations Research Center, MIT, Cambridge, MA, USA, \EMAIL{dbertsim@mit.edu}}
\AUTHOR{Jean Pauphilet}
\AFF{Operations Research Center, MIT, Cambridge, MA, USA, \EMAIL{jpauph@mit.edu}}
\AUTHOR{Bart Van Parys}
\AFF{Operations Research Center, Sloan School of Management, MIT, Cambridge, MA, USA, \EMAIL{vanparys@mit.edu}}

} 

\TITLE{Sparse Classification: A Scalable Discrete Optimization Perspective}


\ABSTRACT{
We formulate the sparse classification problem of $n$ samples with  $p$ features as a binary convex optimization problem and propose a cutting-plane algorithm to solve it exactly. For sparse logistic regression and sparse SVM, our algorithm finds optimal solutions for $n$ and $p$ in the $10,000$s within minutes. On synthetic data our algorithm achieves perfect support recovery in the large sample regime. Namely, there exists a $n_0$ such that the algorithm takes a long time to find the optimal solution and does not recover the correct support for $n<n_0$, while for $n\geqslant n_0$, the algorithm quickly detects all the true features, and does not return any false features. In contrast, while Lasso accurately detects all the true features, it persistently returns incorrect features, even as the number of observations increases. Consequently, on numerous real-world experiments, our outer-approximation algorithms returns sparser classifiers  
while achieving similar predictive accuracy as Lasso. 
To support our observations, we analyze conditions on the sample size needed to ensure full support recovery in classification. Under some assumptions on the data generating process, we prove that information-theoretic limitations impose $n_0 <  C \left(2 + \sigma^2\right) k \log(p-k)$, for some constant $C>0$.
}

\KEYWORDS{Sparse classification; Binary convex optimization; Support recovery}

\SUBJECTCLASS{Programming: integer, nonlinear; Statistics: data analysis}

\maketitle
\section{Introduction}
\label{sec:intro}
Sparse classification is a central problem in machine learning as it leads to more interpretable models. Given data $\{(x_i, y_i)\}_{i=1,\dots,n}$ with $y_i \in \{-1,1\}$ and $x_i \in \mathbb{R}^p$, we aim at computing the estimator $w$ which minimizes an empirical loss $\ell$ subject to the constraint that its number of nonzero entries does not exceed $k$: 
\begin{equation}
\label{eqn:sparse_reg}
\min_{w \in \mathbb{R}^p,b \in \mathbb{R}} \sum_{i=1}^n \ell(y_i, w^\top x_i +b) \mbox{  s.t.  } \Vert w \Vert_0 \leqslant k.
\end{equation}
Despite its conceptual appeal, Problem \eqref{eqn:sparse_reg} is recognized as an NP-hard optimization problem \citep{natarajan1995sparse}. Thus, much of the literature has focused on heuristic proxies and replaced the $\ell_0$ pseudo-norm with so-called sparsity-inducing convex norms \citep{bach2012optimization}. Even though regularization enforces robustness more than sparsity \citep{bertsimas2009equivalence}, the $\ell_1$-penalty formulation
\begin{equation}
\label{eqn:lasso}
\min_{w \in \mathbb{R}^p,b \in \mathbb{R}} \sum_{i=1}^n \ell(y_i, w^\top x_i +b) + \lambda \Vert w \Vert_1,
\end{equation}
known as Lasso \citep{tibshirani} is abundantly used in practice. Efficient numerical algorithms exist \citep{friedman2010regularization}, off-the-shelf implementations are publicly available \citep{friedman2013glmnet} and recovery of the true sparsity is theoretically guaranteed under some assumptions on the data. In regression problems with i.i.d. Gaussian measurements for instance, \citet{wainwright2009sharp} proved that Lasso recovers the $k$ correct features with high probability (w.h.p.\ in short) for $n>(2k + \sigma^2) \log p$ where  $\sigma^2$ is the variance of the noise, a phenomenon they refer to as phase transition in accuracy. On the other hand, recent works \citep{fan2010sure,tibshirani2011regression,su2015false} highlighted the difficulty for $\ell_1$-regularized estimators to select correct features without making false discoveries, considering Lasso as a good feature screening but a poor feature selection procedure. 

Besides algorithm-specific performance, any support recovery algorithm faces information-theoretic limitations as well \citep{wainwright2009information,wang2010information}. In regression, recent work \citep{gamarnik17} indeed proved the existence of a sharp information-theoretic threshold $n^\star$: If $n<n^\star$, exact support recovery by any algorithm is impossible, while it is theoretically achievable for $n>n^\star$. Such results call for further research in learning algorithms in the regime $n^\star < n < (2k + \sigma^2) \log p$ where Lasso fails but  full recovery is achievable  in principle. 

New research in numerical algorithms for solving the exact sparse formulation \eqref{eqn:sparse_reg} has flourished and demonstrated significant improvement on existing heuristics. \citet{bertsimas2014statistics,bertsimas2015logistic} made use of recent advances in mixed-integer optimization to solve sparse linear and logistic regression problems. \citet{pilanci2015sparse} applied a Lagrangian relaxation and random rounding procedure for linear regression and provide sufficient conditions for support recovery with their method. {  \citet{hazimeh2018fast} developed a cyclic coordinate descent strategy combined with local search to efficiently find local optima of the $\ell_0$-penalized ordinary least square regression problem, later extended to other loss functions \citep{dedieu2020learning}.} Recently, sparse linear regression for $n$ and $p$ in $100,000$s was exactly solved for the first time, using a cutting-plane algorithm \citep{bertsimas2016cio}. Their method demonstrates a clear phase transition in accuracy as the sample size $n$ increases and requires less data than Lasso to achieve full recovery. Simultaneously, they observed a phase transition in false discovery, that is, the number of incorrect features selected, and in computational time, which is unique to their method: they exhibited a threshold $n_0$ for which for $n<n_0$ their algorithm takes a long time to find the optimal solution and it is does not recover the  correct support, while for $n\geqslant n_0$, the algorithm is very fast and accurately detects all the true features, but does not return any false features.

Regarding accuracy, such phase transition phenomena are actually not specific to regression problems but are observed in many data analysis and signal processing contexts \citep{donoho2006breakdown,donoho2009observed}. Surprisingly, little if no work focused on classification problems specifically. Guarantees in terms of $\ell_2$ error have been obtained in the so-called $1$-bit compressed sensing setting \citep{boufounos20081, gupta2010sample, plan2013one, jacques2013robust} but they do not precisely address the question of support recovery. {  In recent work, \citet{scarlett2017limits} offer a comprehensive treatment of information theoretic limitations in support recovery for both linear and $1$-bit compressed sensing, in some regimes of noise and sparsity. } Sparse classification in itself has mainly been regarded as a feature selection problem \citep{dash1997feature} and greedy procedures such as Recursive Feature Elimination \citep{guyon2002gene} have shown the most successful.

In this paper, we formulate exact sparse classification as a binary convex optimization problem, propose a tractable cutting-plane algorithm {  and a stochastic variant} to solve it in high dimensions. We also provide an information-theoretic sufficient condition for support recovery in classification that complements existing results in the literature for 1-bit compressed sensing.

\paragraph{Contributions} The contributions of the present paper can be summarized as follows:
\begin{enumerate}
\item Based on duality results for regularized classification, we formulate the exact sparse classification problem as a binary convex optimization problem and propose a tractable outer-approximation algorithm to solve it. Our approach generalizes the one in \citet{bertsimas2016cio}, who address linear regression for which a closed-form solution exists and make extensive use of this closed-form solution. Our framework, however, extends to cases where a closed-form solution is not available and includes, in addition to linear regression, logistic regression and SVM with one or two norms. {  We also propose a stochastic cut generating process to improve scalability of the outer-approximation algorithm with respect to the number of samples $n$.}
\item We demonstrate the tractability and relevance of our algorithm in solving large binary classification problems with logistic and Hinge loss. Among others, we solve a real-world gene-selection problem with $n=1,145$, $p=14,858$ and select four to ten times fewer genes than the $\ell_1$ heuristic with little compromise on the predictive power.
{  On synthetic data, our algorithm can scale to data sets with up to $p=50,000$ features and prove optimality in less than a minute for low sparsity regimes ($k=5$). Our present algorithm and the concurrent paper of \citet{dedieu2020learning}
are, to the best of our knowledge, the only methods currently available that solve sparse classification problems to provable optimality in such high dimensions within minutes. We believe, however, that the tools developed in the present paper may be more versatile and broadly applicable than the tailored ``integrality generation'' technique of \citet{dedieu2020learning}.
Finally, we observe that our stochastic cut generating process reduces computational time by a factor $2$-$10$ compared with the standard outer-approximation procedure for the hardest instances, namely instances where the number of samples $n$ is not large enough for Lasso to achieve perfect recovery.}
\item We demonstrate empirically that cardinality constrained estimators asymptotically achieve perfect support recovery on synthetic data: As $n$ increases, the method accurately detects all the true features, just like Lasso, but does not return any false features, whereas Lasso does. In addition, we observe that computational time of our algorithm decreases as more data is available. We exhibit a smooth transition towards perfect support recovery in classification settings, whereas it is empirically observed \citep{donoho2006breakdown,bertsimas2016cio} and theoretically defined \citep{wainwright2009information,gamarnik17} as a sharp phase transition in the case of linear regression. 
\item Intrigued by {  this smooth} empirical behavior, we show that there exists a threshold $n_0$ on the number of samples such that if $n > n_0$, the underlying truth $w^\star$ minimizes the empirical error with high probability. Assuming $p \geqslant 2k$ and data is generated by  $y_i = sign\left(x_i^\top w^{\star} + \varepsilon_i\right), $ with  $x_i$ i.i.d. Gaussian, $w^\star \in \{0,1\}^p$, supp$(w^{\star} )=k$ and $\varepsilon_i\sim \mathcal{N} (0, \sigma^2)$, we prove that $n_0 <  C \left(2 + \sigma^2\right) k \log(p-k)$, for some constant $C>0$ (Theorem \ref{sufficient}). 
{  Our information-theoretic sufficient condition on support recovery parallels the one obtained by \citet{wainwright2009information} in regression settings, although discreteness of the outputs substantially modify the analysis and the scaling of the bound. }
In recent work, \citet{scarlett2017limits} performed a similar analysis of support recovery in 1-bit compressed sensing. In low signal-to-noise settings, they exhibit matching necessary and sufficient conditions, which suggests the existence of a phase transition. In high signal-to-noise regimes, however, they proved necessary conditions only. Our result is novel, in that it is not only valid for specific sparsity and noise regimes, but for all values of $k$ and $\sigma$ (see Table \ref{tab:comparison.scarlett} for a summary of their results and ours). In particular, our sufficient condition holds when the sparsity $k$ scales linearly in the dimension $p$ and the signal-to-noise ratio is high. In this regime, there is a $\sqrt{\log p}$ factor between our bound and the necessary condition from \citet{scarlett2017limits}. As represented in Figure \ref{fig:theory.summary}, there is an intermediate sample size regime where support recovery is neither provably impossible nor provably achievable, and could explained the smooth phase transition observed. Such an observation is made possible by the combination of both our results. Of course, this observation only suggests that a lack of phase transition is plausible. So is the existence of one and future work might improve on these bounds and close this gap.
\end{enumerate}

 \begin{table}
 \TABLE
{Summary of the necessary and sufficient conditions provided in \citet{scarlett2017limits}, as compared to the sufficient condition we provide in Theorem \ref{sufficient}. We refer to \citep{scarlett2017limits} for the definition of $f_3(\ell)$. \label{tab:comparison.scarlett}}
{
 \begin{tabular}{lc|c}
 \toprule
Parameters  &$k = \Theta(1)$, Low SNR &$k = \Theta(p)$, High SNR  \\
 \midrule
&\multicolumn{2}{c}{Necessary condition for  $\mathbb{P}(\text{error})\not\rightarrow 1$} \\
\citep{scarlett2017limits} & $\pi \sigma^2 \log p$ & $\Omega(p \sqrt{\log p})$\\
 \midrule
&\multicolumn{2}{c}{Sufficient condition for  $\mathbb{P}(\text{error})\rightarrow 0$} \\
\citep{scarlett2017limits} & $\pi \sigma^2 \log p$ & $-$ \\
  Theorem \ref{sufficient} & \multicolumn{2}{c}{  $C (2+\sigma^2) k \log(p-k)$} \\
 \bottomrule
 \end{tabular}
 }
 {}
 \end{table}

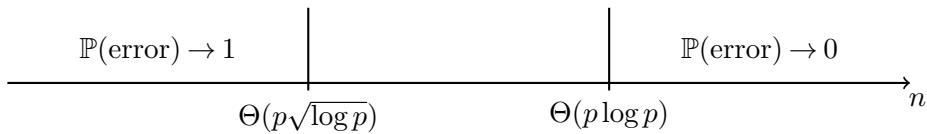
\begin{figure}
\caption{Summary of known necessary \citep[][Cor. 4]{scarlett2017limits} and sufficient (see Theorem \ref{sufficient}) conditions on the sample size $n$ to achieve perfect support recovery in classification, when the sparsity $k$ scales linearly in the dimension $p$ and the signal-to-noise ratio is high. Thresholds are given up to a multiplicative constant.} \label{fig:theory.summary}
\centering
{ 
\begin{tikzpicture}
   \draw[thick, ->] (0, 0) -- (12, 0);
    \node[below] at (12.1, 0) {$n$};

	\draw[thick, -] (4, -.1) -- (4, 1);
    \node[below] at (4, -.1) {$\Theta(p\sqrt{\log p})$};
    \node[above] at (2, .1) {$\mathbb{P}(\text{error})\rightarrow 1$};

    \draw[thick, -] (8, -.1) -- (8, 1);
    \node[below] at (8, -.1) {$\Theta(p \log p)$};
    \node[above] at (10, .1) {$\mathbb{P}(\text{error})\rightarrow 0$};
 \end{tikzpicture}
 }
\end{figure}

{  \paragraph{Structure} We derive the binary convex optimization formulation for exact sparse classification, present an outer-approximation algorithm to solve it, and propose a stochastic cut generating process in Section \ref{sec:dual_framework}. We evaluate the performance of our method, both in terms of tractability and support recovery ability, in Section \ref{sec:log_reg}. Finally, we prove information-theoretic sufficient condition for support recovery in Section \ref{sec:theory}.}

\paragraph{Notation} We denote by $\textbf{e}$ the vector whose components are equal to one. If not specified, its dimension should be inferred from the context. The set $S_k^p$ denotes the set
$$S_k^p := \left\lbrace s\in \lbrace 0, 1 \rbrace^p \: : \: \textbf{e}^\top s \leqslant k \right\rbrace ,$$ which contains all binary vectors $s$ selecting $k$ components from $p$ possibilities. Assume $(y_1,\ldots,y_p)$ is a collection of elements and $s \in S_k^p$, then $y_s$ denotes the sub-collection of $y_j$ where $s_j=1$. We use $\| x \|_0$ to denote the number of elements of a vector which are nonzero. 

\section{Dual Framework}
\label{sec:dual_framework}
In this section, we use duality results to formulate a regularized version of the sparse classification problem \eqref{eqn:sparse_reg} as a binary convex optimization problem and propose a cutting-plane approach to solve it efficiently.
\subsection{Regularized Classification}
We first introduce the basic notation and recall some well-known properties for the case of non-sparse classification. Two very popular classification methods in machine learning are logistic regression and Support Vector Machine (SVM). Despite different motivations and underlying intuition, both methods lead to a similar formulation which can be addressed under the unifying lens of regularized classification:   
\begin{equation}
\label{eqn:reg_class}
\min_{w \in \mathbb{R}^p,b \in \mathbb{R}} \sum_{i=1}^n \ell(y_i, w^\top x_i +b) + \dfrac{1}{2 \gamma} \Vert w \Vert_2^2,
\end{equation}
where $\ell$ is an appropriate loss function and $\gamma$ a regularization coefficient.

In the logistic regression framework, the loss function is the logistic loss $$\ell(y, u) = \log\left( 1+e^{-yu} \right),$$ and the objective function can be interpreted as the negative log-likelihood of the data plus a regularization term, which ensures strict convexity of the objective and existence of an optimal solution. 

In the SVM framework, the loss function $\ell$ is the hinge loss: $$\ell(y, u) = \max(0, 1-yu).$$ Under proper normalization assumptions, the square norm $\| w \|_2^2$ relates to the notion of margin, which characterizes the robustness of the separating hyperplane $\lbrace x : w^\top x + b = 0 \rbrace$, while the loss part penalizes the data points which do no satisfy $y_i(w^\top x_i + b) \geqslant 1$, that is points which are misclassified or lie within the margin \citep{vapnik1998support}.

In addition, this general formulation \eqref{eqn:reg_class} can be extended to a broad family of other loss functions used in classification (e.g. 2-norm SVM) or even in regression problems. Throughout the paper we make the following assumption:
\begin{assumption}
\label{conv_loss}
The loss function $\ell(y,\cdot)$ is convex for $y \in \{-1,1\}$.
\end{assumption}
In classification, deeper results and insights can typically be obtained by adopting a dual perspective. Denoting $X = (x_i^\top)_{i=1,...,n} \in \mathbb{R}^{n \times p}$ the design matrix, we have:
\begin{theorem}
\label{dual_nonsparse}
Under Assumption \ref{conv_loss}, strong duality holds for problem \eqref{eqn:reg_class} and its dual is
\begin{equation}
\label{eqn:dual_reg_class}
\max_{\alpha \in \mathbb{R}^n : \textbf{e}^\top \alpha = 0} - \sum_{i=1}^n  \hat \ell(y_i,\alpha_i) - \dfrac{\gamma}{2} \alpha^\top X X^\top \alpha \; ,
\end{equation} 
where $ \hat \ell(y, \alpha) := \max_{u \in \mathbb{R}} u \alpha - \ell(y,u) $ is the \emph{Fenchel conjugate} of the loss function $\ell$ \citep[see][chap.~3.3]{boyd2004convex}.
\end{theorem}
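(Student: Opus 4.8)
The plan is to derive the dual through a standard Lagrangian argument after decoupling the loss from the linear predictor, and to justify strong duality by exploiting the fact that the only constraints introduced are affine. First I would introduce auxiliary variables $u_i$ modeling the margin and rewrite \eqref{eqn:reg_class} as the equivalent problem
\[
\min_{w, b, u} \; \sum_{i=1}^n \ell(y_i, u_i) + \frac{1}{2\gamma}\norm{w}_2^2 \quad \text{s.t.} \quad u_i = w^T x_i + b, \; i = 1,\dots,n,
\]
which is convex by Assumption \ref{conv_loss} and carries only affine equality constraints. I would then attach a multiplier $\alpha_i$ to each constraint and form the Lagrangian
\[
L(w,b,u,\alpha) = \sum_{i=1}^n \ell(y_i,u_i) + \frac{1}{2\gamma}\norm{w}_2^2 + \sum_{i=1}^n \alpha_i\left(w^T x_i + b - u_i\right).
\]

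The key observation is that $L$ separates across the blocks $u$, $b$ and $w$, so the dual function $g(\alpha) = \inf_{w,b,u} L$ may be computed block by block. Minimizing over each $u_i$ gives $\inf_{u_i}\{\ell(y_i,u_i) - \alpha_i u_i\} = -\hat\ell(y_i,\alpha_i)$ directly from the definition of the Fenchel conjugate. The term linear in $b$ equals $b\,\textbf{e}^T\alpha$, whose infimum over $b\in\mathbb{R}$ is $-\infty$ unless $\textbf{e}^T\alpha = 0$; this is precisely what produces the dual feasibility constraint. Finally, using $\sum_i \alpha_i x_i = X^T\alpha$, minimizing the strictly convex quadratic $\tfrac{1}{2\gamma}\norm{w}_2^2 + w^T X^T\alpha$ over $w$ yields minimizer $w = -\gamma X^T\alpha$ with optimal value $-\tfrac{\gamma}{2}\alpha^T X X^T\alpha$. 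Collecting the three pieces recovers exactly the claimed dual objective \eqref{eqn:dual_reg_class}.

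It remains to argue that the two optimal values coincide, which I expect to be the only delicate point. Since the reformulated primal is convex with objective finite on all of $\mathbb{R}^{p}\times\mathbb{R}\times\mathbb{R}^n$ (both the logistic and hinge losses are finite everywhere) and all its constraints are affine, the refined Slater condition is automatically met: for affine constraints one needs only feasibility, and any $(w,b)$ together with $u_i := w^T x_i + b$ is feasible. Hence strong duality holds and the primal and dual optimal values are equal. I would close by remarking that convexity of $\ell(y,\cdot)$, via lower semicontinuity of the losses considered, ensures the biconjugate coincides with $\ell$, so the passage to Fenchel conjugates loses no information and the equivalence between \eqref{eqn:reg_class} and \eqref{eqn:dual_reg_class} is tight.
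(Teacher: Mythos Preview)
Your proposal is correct and follows essentially the same Lagrangian derivation as the paper: introduce auxiliary variables for the linear predictors, dualize the resulting affine equality constraints, and minimize blockwise over $u$, $b$, and $w$ to recover the Fenchel conjugate, the constraint $\textbf{e}^T\alpha=0$, and the quadratic term respectively. Your treatment of strong duality via the refined Slater condition for affine constraints is slightly more careful than the paper's one-line appeal to Slater, and your closing remark on the biconjugate is extraneous to the statement but harmless.
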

Table \ref{tab:loss_functions} summarizes some popular loss functions in classification and their corresponding Fenchel conjugates. 
\begin{table}[b]
\TABLE
{Examples of loss functions and their corresponding Fenchel conjugates, as defined in Theorem \ref{dual_nonsparse} \citep{bach2009high}.\label{tab:loss_functions}}
{\begin{tabular}{lll}
\toprule
Method &Loss $\ell(y, u)$ & Fenchel conjugate $\hat \ell(y,\alpha)$ \\
\midrule
Logistic loss & $\log \left( 1 + e^{-y u}\right)$ & $\left\{  \begin{array}{ll} (1 + y \alpha) \log ( 1 + y \alpha) - y \alpha \log ( - y \alpha), &\mbox{ if } y \alpha \in [-1,0],  \\  \vspace{3pt}  +\infty, & \mbox{ otherwise.} \end{array}  \right. $\\ \midrule
1-norm SVM & $\max(0, 1-y u)$ &   $\left\{ \begin{array}{ll} y \alpha, &\mbox{ if } y \alpha \in [-1,0],  \\   \vspace{3pt}  +\infty, & \mbox{ otherwise.} \end{array}\right.$  \\ \midrule
2-norm SVM & $\tfrac{1}{2} \max(0, 1-y u)^2$ &$\left\{ \begin{array}{ll}\tfrac{1}{2} \alpha^2 + y \alpha, &\mbox{ if } y \alpha \leqslant 0,  \\  \vspace{3pt}  +\infty, &\mbox{ otherwise.} \end{array}\right. $ \\
\bottomrule
\end{tabular}}
{}
\end{table}
\proof{Proof of Theorem \ref{dual_nonsparse}}
For regularized classification, we have that
\begin{align*}
\min_{w,b} \sum_{i=1}^n \ell(y_i, w^\top x_i +b) + \dfrac{1}{2 \gamma} \Vert w \Vert_2^2 =\min_{w,b, z} \sum_{i=1}^n \ell(y_i, z_i) + \dfrac{1}{2 \gamma} \Vert w \Vert_2^2 \mbox{ s.t. } z_i = w^\top x_i + b. \end{align*}
By Assumption \ref{conv_loss}, the objective is convex, the optimization set is convex and Slater's conditions hold \citep{boyd2004convex}. Hence, strong duality must hold and the primal is equivalent to the dual problem. To derive the dual formulation, we introduce Lagrange multipliers $\alpha_i$ associated with the equality constraints: 
\begin{flalign*}
&\min_{w,b, z} \sum_{i=1}^n \ell(y_i, z_i) + \dfrac{1}{2 \gamma} \Vert w \Vert_2^2 \mbox{ s.t. } z_i = w^\top x_i + b \\
&=\min_{w,b, z} \sum_{i=1}^n \ell(y_i, z_i) + \dfrac{1}{2 \gamma} \Vert w \Vert_2^2 +  \max_{\alpha \in \mathbb{R}^n} \sum_{i=1}^n \alpha_i (w^\top x_i + b - z_i)\\
&=\min_{w,b, z} \max_{\alpha}   \left( \sum_{i=1}^n  \ell(y_i, z_i) - \alpha_i z_i \right) + \left( \dfrac{1}{2 \gamma} \Vert w \Vert^2 + w^\top \left[ \sum_i \alpha_i x_i \right] \right) + b \, \textbf{e}^\top \alpha \\
&=\max_{\alpha} \sum_{i=1}^n \min_{z_i} \left( \ell(y_i, z_i) - \alpha_i z_i \right) + \min_{w} \left( \dfrac{1}{2 \gamma} \Vert w \Vert^2 + w^\top X^\top \alpha \right) + \min_{b} b \, \textbf{e}^\top \alpha.
\end{flalign*}
Let us consider the three inner minimization problems separately. 
\noindent First, 
\begin{align*}
\min_{z_i} \left( \ell(y_i, z_i) - \alpha_i z_i \right) &= -\max_{z_i} \left( \alpha_i z_i - \ell(y_i, z_i) \right) = -\hat \ell(y_i, \alpha_i). 
\end{align*}
Then, $\tfrac{1}{2 \gamma} \Vert w \Vert^2 + w^\top X^\top \alpha $ is minimized at $w^\star$ satisfying: $ \tfrac{1}{\gamma} w^\star + X^\top \alpha = 0$. Hence \begin{align*}
\min_{w} \left( \dfrac{1}{2 \gamma} \Vert w \Vert^2 + w^\top X^\top \alpha \right) &= - \dfrac{1}{2 \gamma} \Vert w^\star \Vert^2 = - \dfrac{\gamma}{2} \alpha^\top XX^\top \alpha. 
\end{align*}
Finally, $\min_{b} b \, \textbf{e}^\top \alpha$ is bounded if and only if $\textbf{e}^\top \alpha = 0$, thus we obtain \eqref{eqn:dual_reg_class}. \hfill\halmos
\endproof

The derivation of the dual \eqref{eqn:dual_reg_class} reveals that the optimal primal variables $w^\star$ can be recovered from the dual variables $\alpha^\star$ via the relationship $w^\star = - \gamma X^\top \alpha^\star$. In other words, $w^\star$ is a linear combination of the data points $X$. Such an observation has historically led to the intuition that $w^\star$ was \emph{supported} by some observed vectors $x_i$ and the name Support Vector Machine was coined \citep{cortes1995support}. {  Conversely, $\alpha^\star$  relates to the primal variables $w^\star$ via the relationship $\alpha^\star_i \in \partial \ell(y_i, x_i^\top w^\star)$, where $\partial \ell(y_i, x_i^\top w^\star)$ denotes the sub-differential of the loss function $\ell(y_i,\cdot)$ evaluated at $x_i^\top w^\star$. If $\ell$ is differentiable, like the logistic loss, this relationship uniquely defines $\alpha^\star_i$. }

Moreover, the dual point of view opens the door to non-linear classification using kernels \citep{scholkopf2001learning}. The positive semi-definite matrix $X X^\top$, often referred to as the kernel or Gram matrix, is central in the dual problem \eqref{eqn:dual_reg_class} and could be replaced by any kernel matrix $K$ whose entries $K_{ij}$ encode some measure of similarity between inputs $x_i$ and $x_j$.


\paragraph{Numerical algorithms} There is a rich literature on numerical algorithms for solving either the primal \eqref{eqn:reg_class} or the dual \eqref{eqn:dual_reg_class} formulation for the regularized classification problem in the case of logistic regression and SVM. Gradient descent or Newton-Raphson methods are well-suited when the loss function is smooth. In addition, in the case where the dual problem is constrained to $\textbf{e}^\top \alpha = 0$, particular step size rules \citep{calamai1987projected,bertsekas1982projected} or trust regions \citep{lin2008trust} can be implemented to cope with such linear constraints. When the loss function is not continuously differentiable, sub-gradient descent as proposed in the Pegasos algorithm \citep{shalev2011pegasos} provides an efficient optimization procedure. Among the machine learning community, coordinate descent methods have also received a lot of attention recently, especially in the context of regularized prediction, because of their ability to compute a whole regularization path at a low computational cost \citep{friedman2010regularization}. For coordinate descent algorithms specific to the regularized classification problem we address in this paper, we refer to \citep{hsieh2008dual,yu2011dual,keerthi2005fast}.

\begin{remark}
Theorem \ref{dual_nonsparse} does not hold  when the loss is not convex, for instance in the case of the empirical misclassification rate 
\begin{equation}
\label{eqn:sparse_empiricalrate}
\min_{w,b} \sum_{i=1}^n \textbf{1}_{y_i( w^\top x_i +b) < 0} + \dfrac{1}{2 \gamma} \Vert w \Vert_2^2.
\end{equation}
Indeed, strong duality does not hold. The objective value is clearly finite and nonnegative but since $\hat \ell(y, \alpha) = +\infty$, the dual problem has cost $-\infty$.
\end{remark}

\subsection{Dual Approach to Sparse Classification}
Sparsity is a highly desirable property for statistical estimators, especially in high-dimensional regimes ($p\gg n$) such as the ones encountered in biological applications, where interpretability is crucial. A natural way to induce sparsity is to add a constraint on the number of nonzero coefficients of $w$ and solve:
\begin{equation}
\label{eqn:sparse_reg_class}
\min_{w \in \mathbb{R}^p,b \in \mathbb{R}} \sum_{i=1}^n \ell(y_i, w^\top x_i +b) + \dfrac{1}{2 \gamma} \Vert w \Vert_2^2 \; \mbox{  s.t. } \; \Vert w \Vert_0 \leqslant k.
\end{equation}
Actually, \eqref{eqn:sparse_reg_class} can be expressed as a convex binary optimization problem as stated in the following theorem:

\begin{theorem}
\label{cio_formulation}
Problem \eqref{eqn:sparse_reg_class} is equivalent to 
\begin{equation}
\label{eqn:cio}
\min_{s \in S_k^p} c(s),
\end{equation}
where for any $s \in \{0,1\}^p$, 
\begin{equation}
\label{eqn:dual_inner_reg}
c(s) := \max_{\alpha \in \mathbb{R}^n: \textbf{e}^\top\alpha = 0} f(\alpha,s), \; \mbox{  with  } f(\alpha,s) :=  -\sum_{i=1}^n \hat \ell(y_i, \alpha_i) - \dfrac{\gamma}{2} \sum_{j=1}^n s_j \alpha^\top X_j X_j^\top \alpha .
\end{equation}
In particular, $c(s)$ is convex over $[0,1]^p$.
\end{theorem}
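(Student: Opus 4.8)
The plan is to establish the two assertions separately: the equivalence of the sparse problem \eqref{eqn:sparse_reg_class} with the binary program \eqref{eqn:cio}, and then the convexity of $c$ on $[0,1]^p$. The guiding idea is that the combinatorial constraint $\Vert w \Vert_0 \leqslant k$ can be made explicit through a choice of support, after which each resulting subproblem is an ordinary regularized classification problem to which Theorem \ref{dual_nonsparse} applies.

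First I would reformulate the sparsity constraint. The constraint $\Vert w \Vert_0 \leqslant k$ holds if and only if there is a binary vector $s \in S_k^p$ with $w_j = 0$ whenever $s_j = 0$. This turns \eqref{eqn:sparse_reg_class} into a nested minimization: an outer minimization over $s \in S_k^p$ and, for each fixed $s$, an inner minimization over $(w,b)$ with $w$ supported on the ones of $s$. For fixed $s$ the inner problem is precisely the regularized classification problem \eqref{eqn:reg_class} with design matrix $X_s$, the submatrix of $X$ retaining only the columns $X_j$ with $s_j = 1$.

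Next I would dualize the inner problem using Theorem \ref{dual_nonsparse}. Assumption \ref{conv_loss} guarantees convexity of the loss, so the same Slater argument used there applies to $X_s$ and yields strong duality; the inner value equals $\max_{\alpha:\, \textbf{e}^T \alpha = 0} -\sum_{i} \hat\ell(y_i,\alpha_i) - \tfrac{\gamma}{2}\,\alpha^T X_s X_s^T \alpha$. The key algebraic step is the Gram-matrix decomposition $X_s X_s^T = \sum_{j:\, s_j=1} X_j X_j^T = \sum_{j=1}^p s_j\, X_j X_j^T$, which identifies the inner value with $c(s)$ as defined in \eqref{eqn:dual_inner_reg}. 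To close the equivalence I would compare optimal values in both directions. Writing $P^\star$ for the optimum of \eqref{eqn:sparse_reg_class} and $Q^\star := \min_{s \in S_k^p} c(s)$: on the one hand, any minimizer of the inner problem at $s$ has at most $\textbf{e}^T s \leqslant k$ nonzeros and is therefore feasible for \eqref{eqn:sparse_reg_class}, so $c(s) \geqslant P^\star$ for every $s$ and hence $Q^\star \geqslant P^\star$; on the other hand, taking $s$ to be the support indicator of an optimal $w^\star$ gives $s \in S_k^p$ and $c(s) \leqslant P^\star$, so $Q^\star \leqslant P^\star$. Together these give $Q^\star = P^\star$.

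The convexity claim then comes essentially for free from the dualization. For each fixed $\alpha$ the map $s \mapsto f(\alpha,s)$ is affine, since $s$ enters \eqref{eqn:dual_inner_reg} only through the linear term $-\tfrac{\gamma}{2}\sum_j s_j\, \alpha^T X_j X_j^T \alpha$. Hence $c(s) = \max_\alpha f(\alpha,s)$ is a pointwise supremum of affine functions of $s$ and is therefore convex on $[0,1]^p$. I do not expect a deep obstacle: the argument is a reduction to Theorem \ref{dual_nonsparse} together with the standard fact that a supremum of affine functions is convex. The one point requiring genuine care is verifying that strong duality transfers unchanged to each restricted inner problem on $X_s$ and that the two-sided value comparison holds, including the case where the optimal $w^\star$ has strictly fewer than $k$ nonzeros; both are handled by observing that restricting features preserves convexity and Slater feasibility, and that the support indicator of any $k$-sparse vector already lies in $S_k^p$.
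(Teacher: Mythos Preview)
Your proposal is correct and follows essentially the same approach as the paper: encode the support by a binary vector $s\in S_k^p$, recognize the inner problem as an instance of \eqref{eqn:reg_class} on $X_s$, apply Theorem~\ref{dual_nonsparse}, use the decomposition $X_s X_s^T=\sum_j s_j X_j X_j^T$, and deduce convexity of $c$ as a pointwise supremum of affine functions of $s$. Your two-sided value comparison and the remark on the case $\|w^\star\|_0<k$ are slightly more explicit than the paper's argument, but the underlying ideas are identical.
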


\proof{Proof of Theorem \ref{cio_formulation}}
Similarly to \citet{bertsimas2016cio}, we introduce an additional binary variable $s \in \{0,1\}^p$ encoding for the support of the sparse classifier $w$. With these notations 
\begin{align*}
w^\top x_i &= \sum_{j=1}^p w_j x_{i,j} = \sum_{j: s_j =1} w_j x_{i,j} =  w_s^\top x_{is}, \\
\Vert w \Vert_2^2 &= \sum_{j=1}^p w_j^2 = \sum_{j: s_j =1} w_j^2 = \Vert w_s \Vert_2^2,
\end{align*}
the cardinality constraint on $w$ yields a linear constraint on $s$
\begin{align*}
s^\top \textbf{e} \leqslant k,
\end{align*}
and \eqref{eqn:sparse_reg_class} can be equivalently written as
\begin{align*}
\min_{s \in S_k^p} \min_{w,b} \sum_{i=1}^n \ell(y_i, w_s^\top x_{is} +b) + \dfrac{1}{2 \gamma} \Vert w_s \Vert_2^2.
\end{align*}
Denoting $c(s)$ the inner minimization problem, we end up solving the pure binary to-be-proved-convex optimization problem
\begin{align*}
\min_{s \in S_k^p} c(s).
\end{align*}
In addition, $c(s) = \min_{w,b} \sum_{i=1}^n \ell(y_i, w_s^\top x_{is} +b) + \dfrac{1}{2 \gamma} \Vert w_s \Vert_2^2 $ is an unconstrained regularized classification problem based on the features selected by $s$ only. Hence, Theorem \ref{dual_nonsparse} applies and 
\begin{align*}
c(s) &= \max_{\alpha \in \mathbb{R}^n} \sum_{i=1}^n -\hat \ell(y_i, \alpha_i) - \dfrac{\gamma}{2} \alpha^\top X_s X_s^\top \alpha \; \mbox{   s.t.  } \; \textbf{e}^\top\alpha = 0.
\end{align*}
Since $\alpha^\top X_s X_s^\top \alpha = \sum_{j:s_j =1} \alpha^\top X_j X_j^\top \alpha = \sum_{j=1}^p s_j \alpha^\top X_j X_j^\top \alpha$, we obtain the desired formulation. 

Finally, let us denote $$f(\alpha, s) :=-\sum_{i=1}^n \hat \ell(y_i, \alpha_i) - \dfrac{\gamma}{2} \sum_{j=1}^n s_j \alpha^\top X_j X_j^\top \alpha.$$ The function $f$ is convex - indeed linear - in $s$ over $[0,1]^p$, so $c$ is convex over $[0,1]^p$. \hfill\halmos
\endproof

In practice, for a given support $s$, we evaluate the function $c(s)$ by solving the maximization problem \eqref{eqn:dual_inner_reg} with any of the numerical procedures presented in the previous section. In what follows, we need to calculate a sub-gradient of the function $c$ as well. Using the dual maximizer $\alpha^\star(s)$ in \eqref{eqn:dual_inner_reg} at a support $s$, we can compute one at no additional computational cost. Indeed, it follows that
\[
	\dfrac{\partial c(s) }{\partial s_j} = - \dfrac{\gamma}{2} \alpha^{\star}(s)^\top X_j X_j^\top \alpha^\star(s).
\]

\subsection{Enhanced Cutting-plane Algorithm}
We solve the convex binary optimizaton problem \eqref{eqn:cio} taking into account that we can readily compute $c(s)$ and $\nabla c (s)$ for any given $s$. None of the commercial solvers available are targeted to solve such CIO problems where there is no closed-form expression for $c(s)$. We propose to adopt an outer approximation approach similar to the one introduced by \citet{duran1986outer} for linear mixed-integer optimization problems.

We first reformulate \eqref{eqn:cio} as a mixed-integer optimization problem in epigraph form
\begin{equation}
\label{eqn:cmio}
\min_{s \in S_k^p, \eta} \eta \mbox{  s.t.  } \eta \geqslant c(s).
\end{equation}
As described in \citet{fletcher1994solving,bonami2008algorithmic}, we find a solution to \eqref{eqn:cmio} by iteratively constructing a piece-wise linear lower approximation of $c$. The solver structure is given in pseudocode in Algorithm \ref{OA}.
\begin{algorithm*}
\caption{Outer-approximation algorithm}
\label{OA}
\fontsize{10}{10}\selectfont
\begin{algorithmic}
\REQUIRE $X \in \mathbb{R}^{n \times p}$, $Y \in \lbrace -1, 1 \rbrace^p$, $k \in \lbrace 1,...,p \rbrace$ , $\gamma$
\STATE $s^{(1)} \leftarrow$ warm-start
\STATE $\eta^{(1)} \leftarrow 0 $
\STATE $t \leftarrow 1 $
\REPEAT
\STATE $s^{(t+1)},\eta^{(t+1)}\leftarrow \text{argmin}_{s,\eta} \left\lbrace \eta \: : s \in S_k^p, \eta \geqslant c(s^{(i)}) + \nabla c(s^{(i)})^\top(s-s^{(i)}),~  i =1,\dots , t \right\rbrace $ 
\STATE $t \leftarrow t+1 $
\UNTIL{$\eta^{(t)} < c(s^{(t)} )$}
\RETURN $s^{(t)} $ 
\end{algorithmic}
\end{algorithm*}

A proof of termination and convergence can be found in \citep{fletcher1994solving}.
\begin{theorem}
\label{termination}
\citep{fletcher1994solving} Under Assumption \ref{conv_loss}, Algorithm \ref{OA} terminates in a finite number of steps and returns an optimal solution of \eqref{eqn:cio}.
\end{theorem}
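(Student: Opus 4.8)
The plan is to combine two facts already established: the feasible set $S_k^p$ is \emph{finite}, and by Theorem \ref{cio_formulation} the cost $c$ is convex on $[0,1]^p$, being a pointwise maximum over $\alpha$ of functions $f(\alpha,\cdot)$ that are affine in $s$. This last structure is what I would use to justify the cuts: for each previously visited support $s_i$, the expression $\nabla c(s_i)$ computed from any inner maximizer $\alpha^*(s_i)$ through $\partial c(s_i)/\partial s_j = -\tfrac{\gamma}{2}\,\alpha^*(s_i)^T X_j X_j^T \alpha^*(s_i)$ is a genuine subgradient of $c$, so the subgradient inequality yields $c(s) \geqslant c(s_i) + \nabla c(s_i)^T(s-s_i)$ for every $s$. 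Hence each linear constraint added to the master problem in Algorithm \ref{OA} is a valid global underestimator of $c$, and each such cut is \emph{tight at its own generating point}, returning exactly $c(s_i)$ when evaluated at $s=s_i$.

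From this I would extract the two bounds that drive the argument. Because the point $(s,c(s))$ is feasible for the master for every $s \in S_k^p$, the optimal master value $\eta_{t+1}$ is a lower bound on $\min_{s \in S_k^p} c(s)$; and since $\eta_{t+1}$ is the smallest $\eta$ compatible with the underestimators at $s_{t+1}$, it also satisfies $\eta_{t+1} \leqslant c(s_{t+1})$. Thus $c(s_{t+1})$ is an achievable upper bound, $\eta_{t+1}$ a lower bound, and the two bracket the optimum; adding cuts only shrinks the master's feasible region, so the lower bounds are nondecreasing.

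The heart of the proof — and the step I expect to be the main obstacle — is ruling out cycling among the integer iterates. Suppose the master ever returns a point $s_{t+1}=s_i$ already visited at an earlier iteration $i\leqslant t$. The cut generated at $s_i$ is among the current constraints, and by tightness it forces $\eta_{t+1}\geqslant c(s_i)=c(s_{t+1})$; together with $\eta_{t+1}\leqslant c(s_{t+1})$ this gives $\eta_{t+1}=c(s_{t+1})$, which closes the gap and triggers the termination test. Therefore at each iteration the algorithm either produces a point of $S_k^p$ not seen before or stops, and since $S_k^p$ is finite only finitely many distinct points exist, so the algorithm must terminate after finitely many iterations. The subtlety to check carefully here is exactly that each cut is both valid and tight at integer points; this is where Assumption \ref{conv_loss} enters, guaranteeing convexity of $c$ and attainment of the inner maximum so that such a subgradient exists.

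Finally I would confirm optimality of the returned iterate. At termination the gap is closed, $\eta_t=c(s_t)$. As $\eta_t$ lower bounds $\min_{s\in S_k^p}c(s)$, we obtain $c(s_t)=\eta_t\leqslant c(s)$ for all feasible $s$, so $s_t$ attains the minimum of $c$ over $S_k^p$ and hence solves \eqref{eqn:cio}.
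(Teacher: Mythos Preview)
Your argument is correct and is precisely the standard outer-approximation convergence proof: valid subgradient cuts that are tight at their generating integer points, combined with finiteness of $S_k^p$, force termination with a certified optimum. The paper does not supply its own proof of this theorem; it simply cites \citet{fletcher1994solving}, whose general argument is essentially what you have reproduced, specialized to the present setting where the discrete feasible set is finite. One small point worth tightening: you invoke Assumption~\ref{conv_loss} to guarantee attainment of the inner maximum, but convexity of the loss alone does not ensure this---what actually delivers attainment in the cases treated here is that the Fenchel conjugates in Table~\ref{tab:loss_functions} confine each $\alpha_i$ to a compact interval, so the dual feasible set is compact and Weierstrass applies.
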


{  Several enhancements have been proposed to improve the convergence speed of Algorithm \ref{OA}. First, } in its original form, Algorithm \ref{OA} requires solving a mixed-integer linear optimization problem at each iteration, which is computationally inefficient. Modern solvers however, such as Gurobi \citep{gurobi} or IBM CPLEX \citep{cplex2011cplex}, can handle \emph{lazy constraints}, a feature that integrates the cutting-plane procedure within a unique branch-and-bound enumeration tree, shared by all subproblems. We implemted Algorithm \ref{OA} in this fashion. {  In addition, decomposition schemes as Algorithm \ref{OA} benefit from performing a rich root node analysis, as advocated by  \citet{fischetti2017redesigning}. In essence, a ``rich'' root node analysis consists of a good initial feasible solution $s^{(1)}$ (i.e, a good upper bound) and a set of initial constraints of the form $\eta \geqslant c(s^{(i)}) + \nabla c(s^{(i)})^\top(s-s^{(i)})$ to obtain tight lower bound as well. Regarding the warm-start $s^{(1)} $, we recommend using the Lasso estimator provided by the \verb|glmnet| package \citep{friedman2013glmnet} or the solution of the Boolean relaxation of \eqref{eqn:cio}. We refer to \citet{pilanci2015sparse} for a theoretical analysis of the latter, and \citet{bertsimas2019sparse,atamturk2019rank} for efficient numerical algorithms to solve it. Regarding the lower-bound and the initial constraints, \citet{fischetti2017redesigning} suggests using the cuts obtained from solving 
the Boolean relaxation of \eqref{eqn:cio} via a similar outer-approximation procedure - in which case there are no binary variables and the technique is often referred to as Kelley's algorithm \citep{kelley1960cutting}. We refrain from implementing this strategy in our case. Indeed, computing $c(s)$ and $\nabla c(s)$ reduces to solving a binary classification problem over the $k$ features encoded by the support of $s$. As a result, the scalability of our approach largely relies on the fact that cuts are computed for sparse vectors $s$. When solving the Boolean relaxation, however, $s$ can be dense while satisfying the constraint $s^\top \textbf{e} \leqslant k$. Instead, we compute a regularization path for the Lasso estimator up to a sparsity level of $k+1$ using the \verb|glmnet| package, and initialize the outer-approximation with the cuts obtained from these solutions.} 

{  Finally, we propose a stochastic version of the cutting-plane algorithm to improve the scalability with respect to $n$.  At the incumbent solution $s^{(t)}$, we observe that we do not need to solve Problem \eqref{eqn:dual_inner_reg} to optimality to obtain a valid linear lower-approximation of  $c(s)$. Indeed, any $\alpha \in \mathbb{R}^n \: : \: \textbf{e}^\top \alpha = 0$ yields 
\begin{align*}
\label{eqn:dual_inner_reg}
c(s) \geqslant  -\sum_{i=1}^n \hat \ell(y_i, \alpha_i) - \dfrac{\gamma}{2} \sum_{j=1}^n s_j \alpha^\top X_j X_j^\top \alpha.
\end{align*} 
Hence, we propose a strategy  (Algorithm \ref{alg:sto}) to find a good candidate solution $\alpha$ and the corresponding lower approximation $c(s) \geqslant \Tilde{c}(s^{(t)}) + \nabla\Tilde{c}(s^{(t)})^\top(s - s^{(t)})$. Our strategy relies on the fact the primal formulation in $w \in \mathbb{R}^k$ only involves $k$ decision variables, with  $k < n$ or even $k \ll n$ in practice. As a result, one should not need the entire data set to estimate  $w^\star(s^{(t)})$. Instead, we randomly select $bSize$ out of $n$ observations, estimate $w^\star(s^{(t)})$ on this reduced data set, and finally average the result over $B$ subsamples. Typically, we take $bSize = \max( 10\% n, 2k)$ and $B=10$ in our experiments. Then, we estimate $\alpha \in \mathbb{R}^n$ by solving the first-order optimality conditions $\alpha_i \in \partial \ell(y_i, x_i^\top w)$. Since our objective is to generate a cut that tightens the current lower-approximation of  $c(s^{(t)})$, we compare $\Tilde{c}(s^{(t)})$ with our current estimate of $c(s^{(t)})$, $\eta^{(t)}$. If $\eta^{(t)} >  \Tilde{c}(s^{(t)})$, i.e., if the approximate cut does not improve the approximation, we reject it and compute $c(s^{(t)}), \nabla c(s^{(t)}))$ exactly instead. 

\begin{algorithm*}
\caption{Stochastic cut generation procedure}
\label{alg:sto}
\fontsize{10}{10}\selectfont
\begin{algorithmic}
\REQUIRE $s \in \{0,1\}^p$, $X \in \mathbb{R}^{n \times k}$, $Y \in \lbrace -1, 1 \rbrace^p$, $B$, $bSize$
\FOR{$b = 1,\dots, B $}
\STATE{Define $\Tilde{X}^b \in \mathbb{R}^{bSize \times k}$ obtained from $X$ by randomly selecting $bSize$ rows.}
\STATE{Compute $\Tilde{w}^b$, solution of \eqref{eqn:reg_class} with input data $(\Tilde{X}^b, Y)$.}
\ENDFOR
\STATE $w \leftarrow \tfrac{1}{B}\sum_b \Tilde{w}_b$ 
\STATE Find $\alpha_i \in \partial \ell(y_i, x_i^\top w)$ 
\STATE $\Tilde{c} \leftarrow  -\sum_i \hat \ell(y_i, \alpha_i) -\tfrac{\gamma}{2} \sum_j s_j (X_j^\top \alpha)^2 $, $\nabla\Tilde{c}(s)_j \leftarrow  -\tfrac{\gamma}{2} (X_j^\top \alpha)^2$
\RETURN $\Tilde{c}(s),  \nabla\Tilde{c}(s)$
\end{algorithmic}
\end{algorithm*}

}
In the next section, we provide numerical evidence that Algorithm \ref{OA}, both for logistic and hinge loss functions, is a scalable method to compute cardinality constrained classifiers and select more accurately features than $\ell_1$-based heuristics.  

\subsection{Practical implementation considerations} \label{ssec:cv.detail}
Sparse regularized classification \eqref{eqn:sparse_reg_class} involves two hyperparameters - a regularization factor $\gamma$ and the sparsity $k$. For the regularization parameter $\gamma$, we fit its value using cross-validation among values uniformly distributed in the log-space: we start with a low value $\gamma_0$  - typically $\gamma_0$ scaling as  $1 / \max_i \| x_i \|^2$ as suggested in \citep{chu2015warm} - and inflate it iteratively by a factor two. {  We similarly tune $k$ by simple hold-out cross-validation over a range of values. We use out-of-sample Area Under the receiving operator Curve as the validation criterion (to maximize). Although we did not implement it, \cite{kenney2018efficient} present a general blueprint consisting of warm-start strategies and a bisection search procedure that can tangibly accelerate the overall cross-validation loop.}

\section{  Numerical experiments: Scalability and Support Recovery}
\label{sec:log_reg}
{  In this section, we evaluate the numerical performance of our method both in terms of scalability and quality of the features selected.}

The computational tests were performed on a computer with Xeon @2.3GhZ processors, 1 core, 8GB RAM. Algorithms were implemented in Julia {  1.0} \citep{LubinDunningIJOC}, a technical computing language, and Problem \eqref{eqn:cio} was solved with Gurobi {  8.1.0} \citep{gurobi}. We interfaced Julia with Gurobi using the Julia package JuMP {  0.21.1} \citep{dunning2015jump}. {  Since our sparse regularized formulation \eqref{eqn:sparse_reg_class} comprises 2 hyperparameters, $\gamma$ and $k$, that control the degree of regularization and sparsity respectively, we compare our method with the ElasticNet formulation 
\begin{equation}
\label{eqn:enet}
\min_{w \in \mathbb{R}^p,b \in \mathbb{R}} \sum_{i=1}^n \ell(y_i, w^\top x_i +b) + \lambda \left[ (1-\alpha) \Vert w \Vert_1 + \alpha \Vert w \Vert_2^2 \right],
\end{equation}
which similarly contains 2 hyper parameters and can be computed efficiently by the \verb|glmnet| package \citep{friedman2013glmnet}. For a fair comparison, we cross-validate $\lambda$ and $\alpha$ using the same procedure as $\gamma$ and $k$ described in Section \ref{ssec:cv.detail}.}
\subsection{{  Support recovery }on synthetic data}
{  We first consider synthesized data sets to assess the feature selection ability of our method compared it to a state-of-the-art $\ell_1$-based estimators.}
\subsubsection{Methodology} We draw $x_i \sim \mathcal{N}(0_p, \Sigma), i=1,\dots,n$ independent realizations from a $p$-dimensional normal distribution with mean $0_p$ and covariance matrix $\Sigma_{ij} = \rho^{|i-j|}$. Columns of $X$ are then normalized to have zero mean and unit variance. We randomly sample a weight vector $w_{true} \in \lbrace -1, 0, 1 \rbrace$ with exactly $k$ nonzero coefficients. We draw $\varepsilon_i, i=1,\dots,n,$ i.i.d. noise components from a normal distribution scaled according to a chosen signal-to-noise ratio $$\sqrt{SNR} = \| X w_{true} \|_2 / \| \varepsilon \|_2.$$ Finally, we construct $y_i$ 
as
$y_i = \text{sign} \left( w_{true}^\top x_i + \varepsilon_i > 0\right).$
This methodology enables us to produce synthetic data sets where we control the sample size $n$, feature size $p$, sparsity $k$, feature correlation $\rho$ and signal-to-noise ratio $SNR$. 

\subsubsection{Support recovery metrics} Given the true classifier $w_{true}$ of sparsity $k_{true}$, we assess the correctness of a classifier $w$ of sparsity $k$ by its accuracy, i.e., the number of true features it selects
$$A(w) = |\lbrace j : w_j \neq 0, w_{true,j} \neq 0 \rbrace| \in \{0,\ldots,k_{true}\},$$
and the false discovery, i.e., the number of false features it incorporates
$$F(w) = |\lbrace j : w_j \neq 0, w_{true,j} = 0 \rbrace| \in \{ 0,\ldots	,p\}.$$
Obviously,  {  $A(w) + F(w) = |\lbrace j : w_j \neq 0 \rbrace|  = k$}. A classifier $w$ is said to \emph{perfectly recover} the true support if it selects the truth ($A(w)=k_{true}$) and nothing but the truth ($F(w)=0$ or equivalently $k = k_{true}$). 

\subsubsection{Selecting the truth...}
We first compare the performance of our algorithm for sparse regression with a ElasticNet, when both methods are given the true number of features in the support $k_{true}$. As mentioned in the introduction, a key property in this context for any best subset selection method, is that it selects the true support as sample size increases, as represented in Figure \ref{fig:LRkfix1}. From that perspective, both methods demonstrate a similar convergence: As $n$ increases, both classifiers end up selecting the truth, with Algorithm \ref{OA} needing somewhat smaller number of samples than Lasso.

\begin{figure}[h]
\FIGURE
{\includegraphics[width=.7\linewidth]{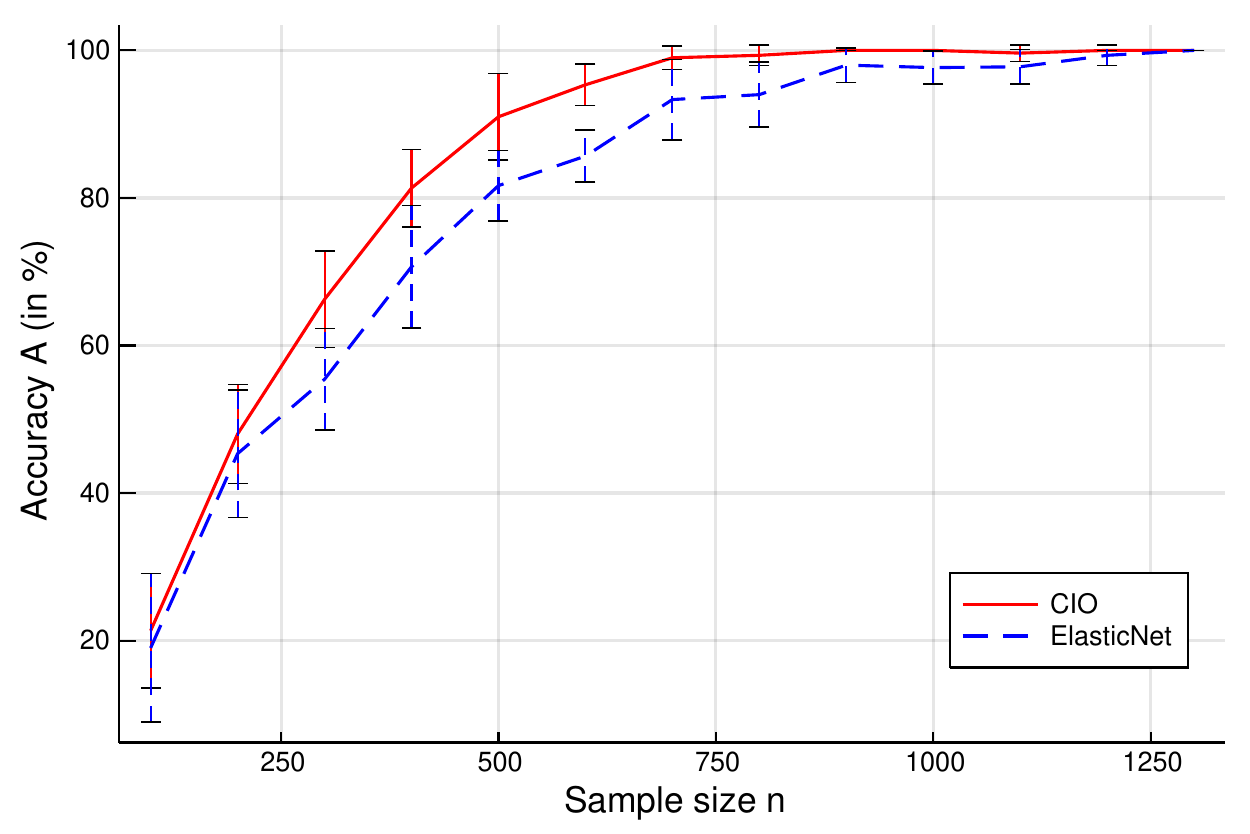}}
{Evolution of the accuracy (number of true features selected) as sample size $n$ increases, for ElasticNet with the logistic loss (dashed blue) and sparse SVM (solid red). \label{fig:LRkfix1}} 
{Results correspond to average values obtained over $10$ data sets with $p=1,000$, $k_{true}=30$, $\rho=0.3$, $SNR \rightarrow \infty$ and increasing $n$ from $100$ to $1,300$.}
\end{figure}

Apart from the absolute number of true/false features selected, one might wonder whether the features selected are actually good features in terms of predictive power. In this metric, sparse regression significantly outperforms the $\ell_1$-based classifier, both in terms of Area Under the Curve (AUC) and misclassification rate, as shown on Figure \ref{fig:LRkfix2}, demonstrating a clear predictive edge of exact sparse formulation. 
\begin{figure*}[h]
\FIGURE
{
\includegraphics[width=.45\linewidth]{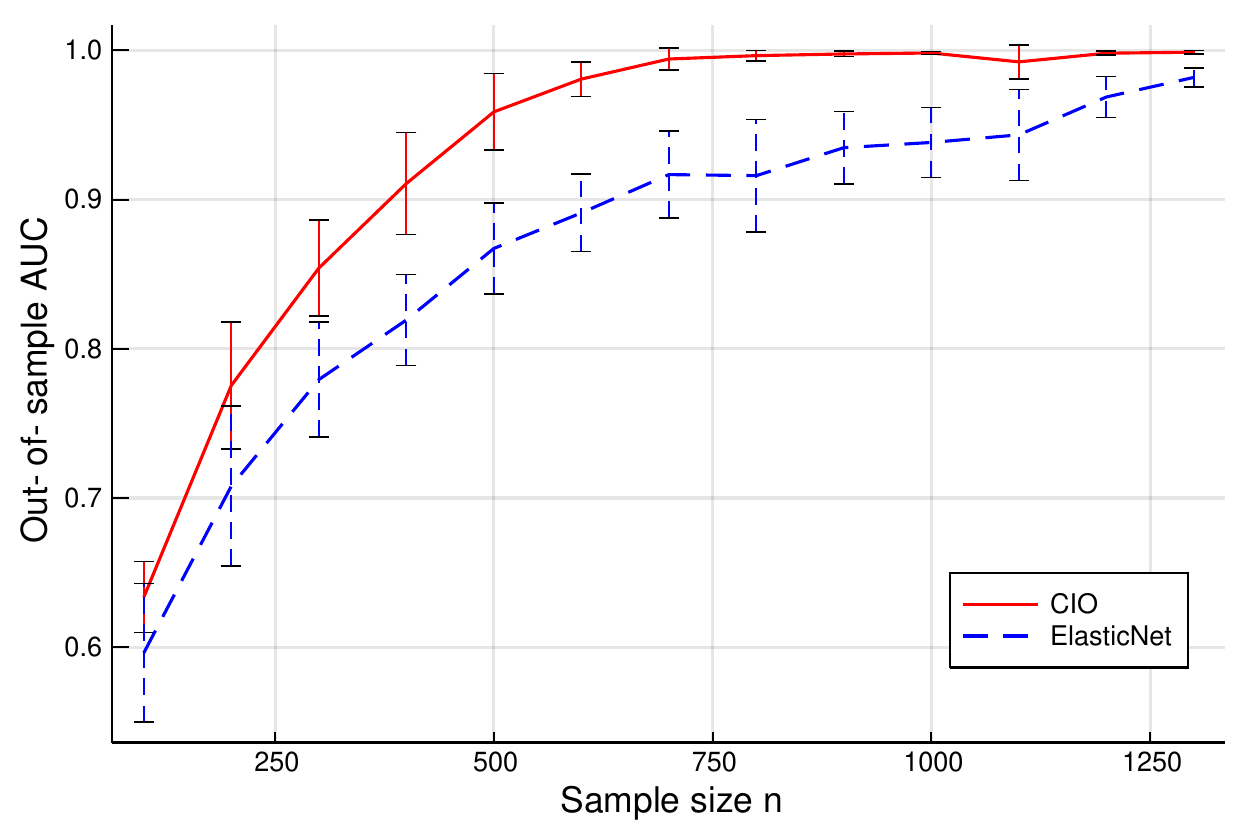}
\includegraphics[width=.45\linewidth]{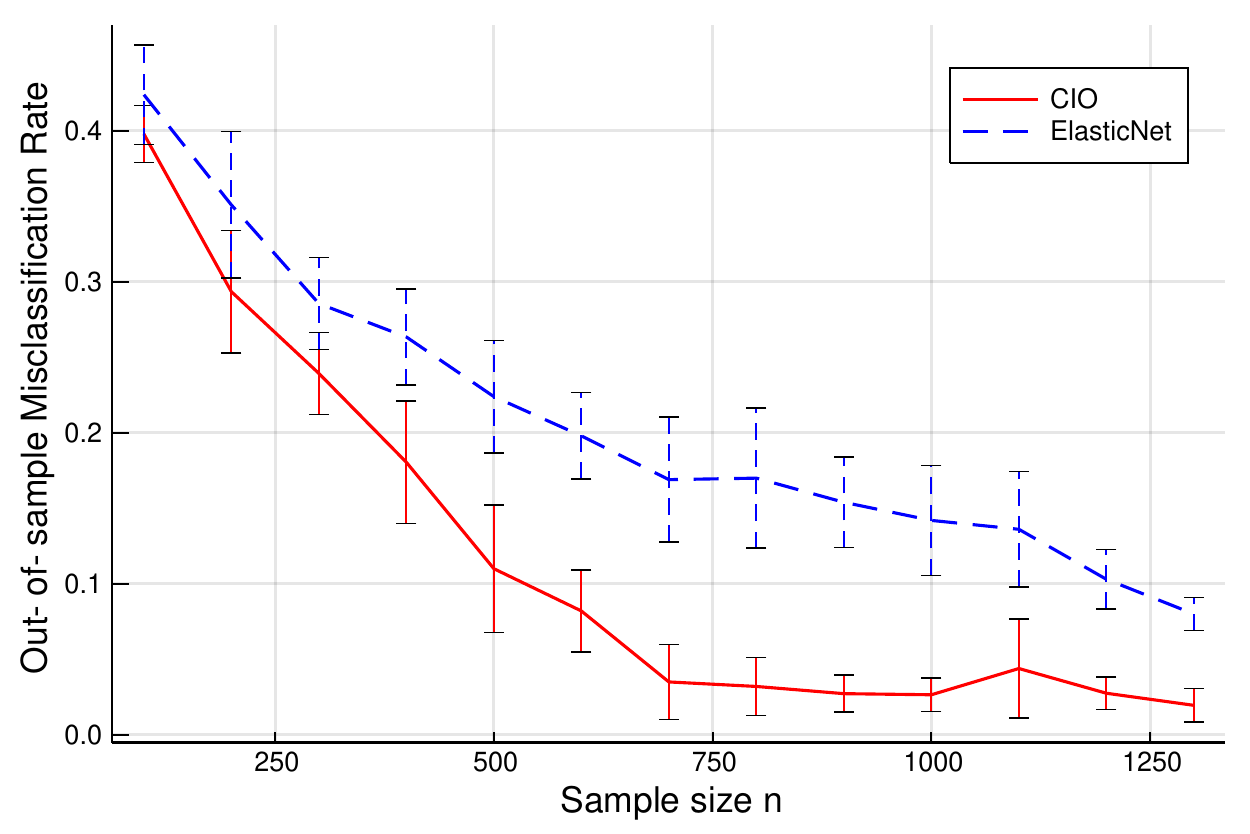}
}
{Evolution of  the AUC (left) and misclassification rate (right)  on a validation set as sample size $n$ increases, for ElasticNet with logistic loss (dashed blue) and sparse SVM (solid red). \label{fig:LRkfix2}} 
{Results correspond to average values obtained over $10$ data sets with $p=1,000$, $k_{true}=30$, $\rho=0.3$, $SNR \rightarrow \infty$ and increasing $n$ from $100$ to $1,300$.}
\end{figure*}

In terms of computational complexity, Figure \ref{fig:SVM.time} represents the number of cuts (left panel) and computational time (right panel) required by our cutting-plane algorithm as the problem size $n$ increases for a fixed value of $\gamma \times n$. For low values of $n$, the number of cuts is in the thousands. Surprisingly, computational effort does not increase with $n$. On the contrary, having more observations reduces the number of cuts down to less than twenty. Computational time evolves similarly: the algorithm reaches the time limit (here, 3 minutes) when $n$ is low, but terminates in a few seconds for high values. For sparse linear regression, \citet{bertsimas2016cio} observed a similar, yet even sharper, phenomenon which they referred to as a phase transition in computational complexity.  The threshold value, however, increases with $\gamma$. In other words, when $n$ is fixed, computational time increases as $\gamma$ increases, which corroborates the intuition that in the limit $\gamma \rightarrow 0$, $w^\star = 0$ is obviously optimal, while the problem can be ill-posed as $\gamma \rightarrow +\infty$. Since the right regularization parameter is unknown \textit{a priori}, one needs to test high but sometimes relevant values of $\gamma$ for which the time limit is reached and the overall procedures terminates in minutes, while \verb|glmnet| requires less than a second. As for the choice of the time limit, it does not significantly impact the performance of the algorithm: As shown on Figure \ref{fig:LRprofile}, the algorithm quickly finds the optimal solution and much of the computational time is spent improving on the lower bound, i.e., proving the solution is indeed optimal. {  We will further explore the numerical scalability of the outer-approximation algorithm in Section \ref{ssec:exp.scale}.}
\begin{figure*}[h]
\FIGURE
{\includegraphics[width=.45\linewidth]{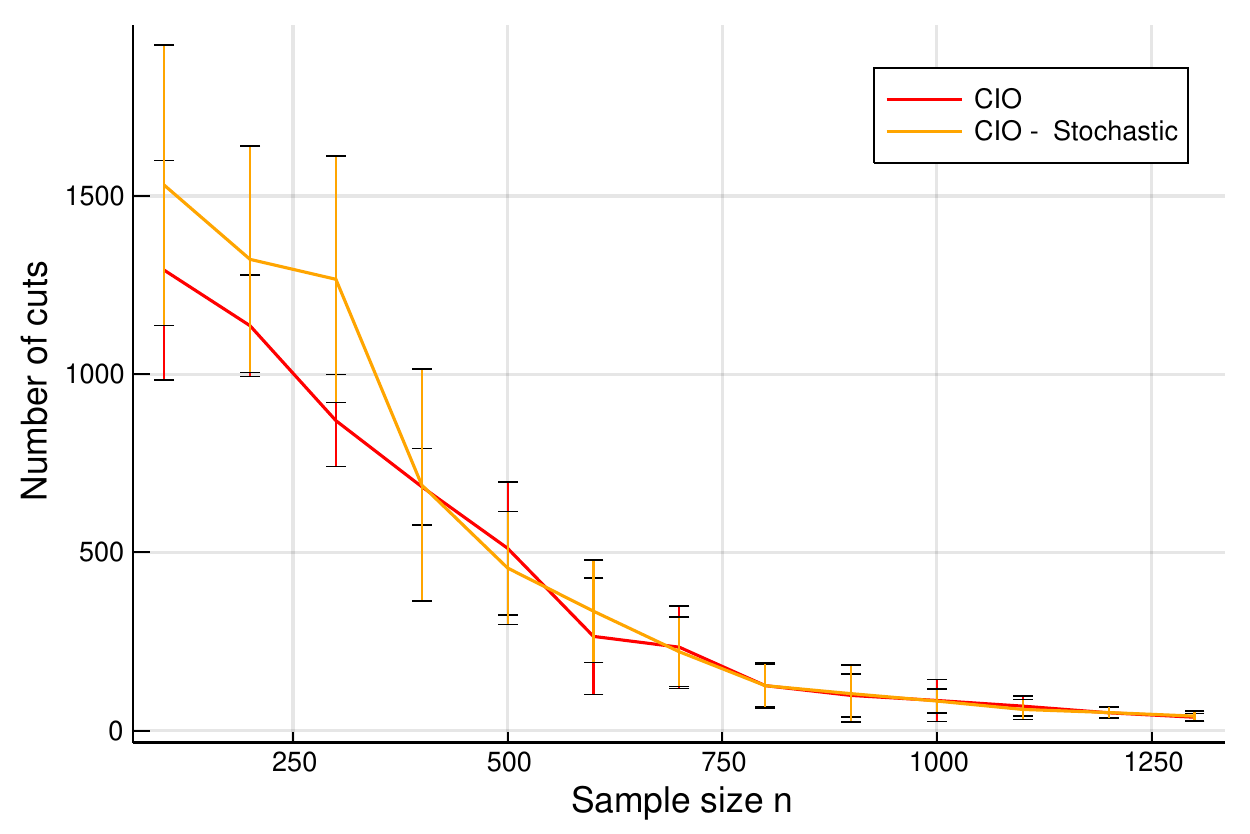}
\includegraphics[width=.45\linewidth]{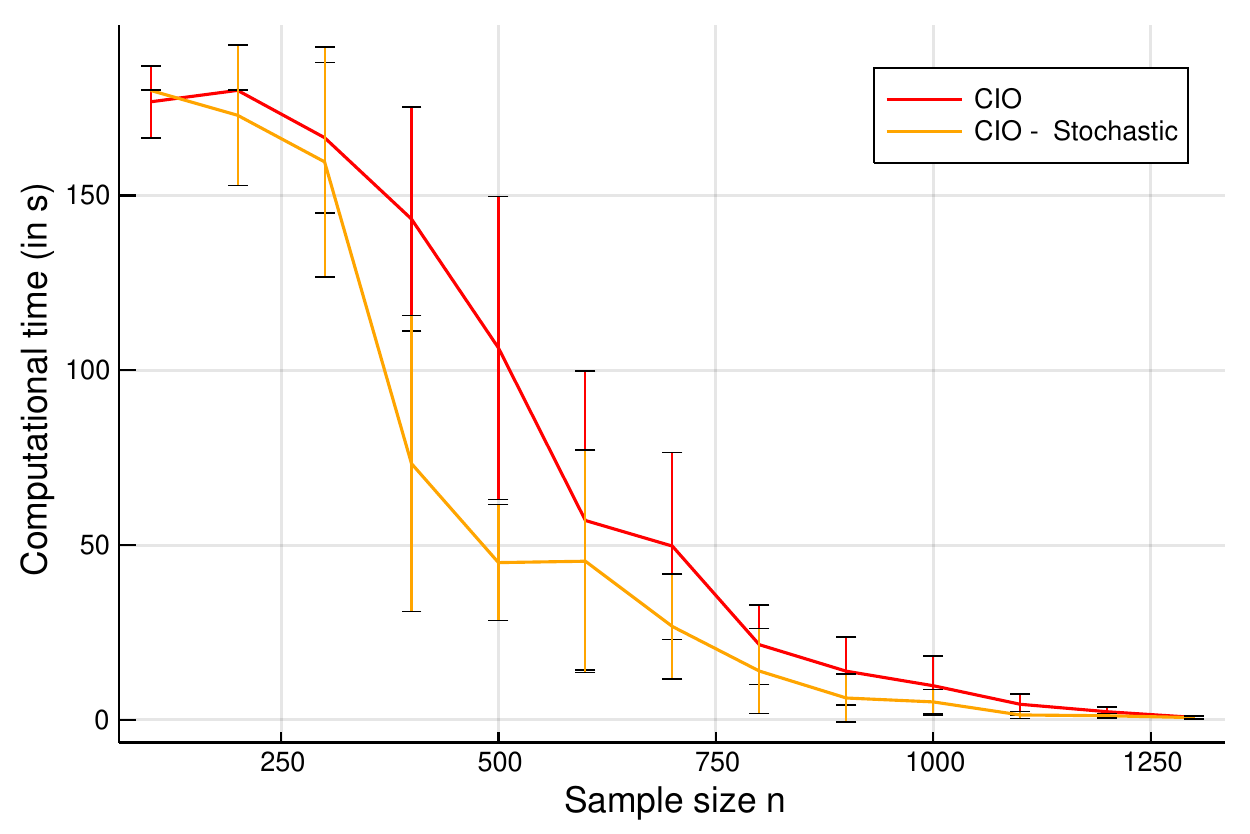} \label{fig:SVM.time}}
{Evolution of the number of cuts (left panel) and computational time (right panel) required by the outer-approximation algorithm with Hinge loss as sample size $n$ increases}
{Results correspond to average values obtained over $10$ data sets with $p=1,000$, $k_{true}=30$, $\rho=0.3$, $SNR \rightarrow \infty$, $\gamma = \gamma_0 / n$ with $\gamma_0 = 2^5 p / (k \max_i \|x_i \|^2)$ and increasing $n$ from $100$ to $1,300$. Algorithm \ref{OA} is initialized with the Lasso solution.}
\end{figure*}
\begin{figure*}[h]
\FIGURE
{\includegraphics[width=.6\linewidth]{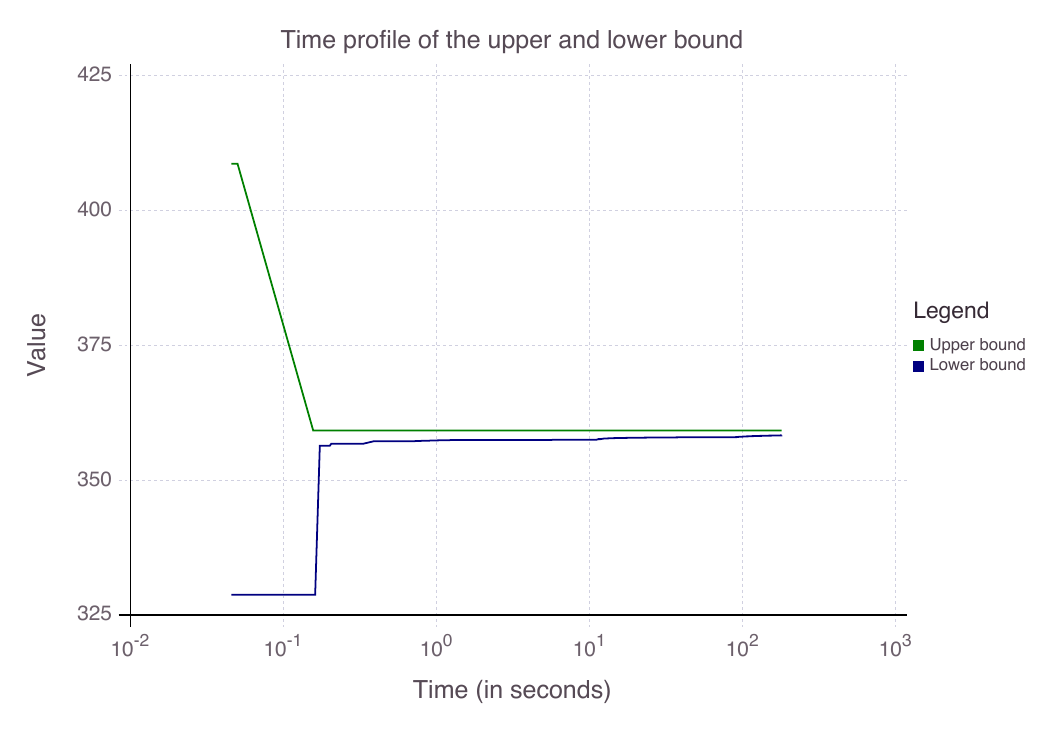}}
{  Evolution of the upper (best feasible solution, in green) and lower bounds (in blue) in Algorithm \ref{OA} as computational of time (in log scale) increases. \label{fig:LRprofile}}
{Results for one problem instance with $p=1,000$, $k_{true}=30$, $\rho=0.3$, $SNR \rightarrow \infty$, $\gamma = \gamma_0 / n$ with $\gamma_0 = 2^7 p / (k \max_i \|x_i \|^2)$ and  $n=600$.}
\end{figure*}
\subsubsection{...and nothing but the truth}
In practice, however, the length of the true support $k_{true}$ is unknown \textit{a priori} and is to be determined using cross-validation. Given a data set with a fixed number of samples $n$ and features $p$, we compute classifiers with different values of sparsity parameter $k$ and choose the value which leads to the best accuracy on a validation set. Irrespective of the method, AUC as a function of sparsity $k$ should have an inverted-U shape: if $k$ is too small, not enough features are taken into account to provide accurate predictions. If $k$ is too big, the model is too complex and overfits the training data. Hence, there is some optimal value $k^\star$ which maximizes validation AUC (equivalently, one could use misclassification rate instead of AUC as a performance metric). Figure \ref{fig:LRkcv1} represents the evolution of the AUC 
on a validation set as sparsity $k$ increases for Lasso and the exact sparse logistic regression. The exact CIO formulation leads to an optimal sparsity value $k^\star_{CIO}$ which is much closer to the truth than $k^\star_{Lasso}$, and this observation remains valid when $n$ increases as shown on the left panel of Figure \ref{fig:LRkcv2}. In addition, Figure \ref{fig:LRkcv2} also exposes a major deficiency of Lasso as a feature selection method: even when the number of samples increases, Lasso fails to select the relevant features \emph{only} and returns a support $k^\star_{Lasso}$ much larger than the truth whereas $k^\star_{CIO}$ converges to $k_{true}$ quickly as $n$ increases, hence selecting the truth and nothing but the truth.
\begin{figure*}[h]
\FIGURE
{\includegraphics[width=.5\linewidth]{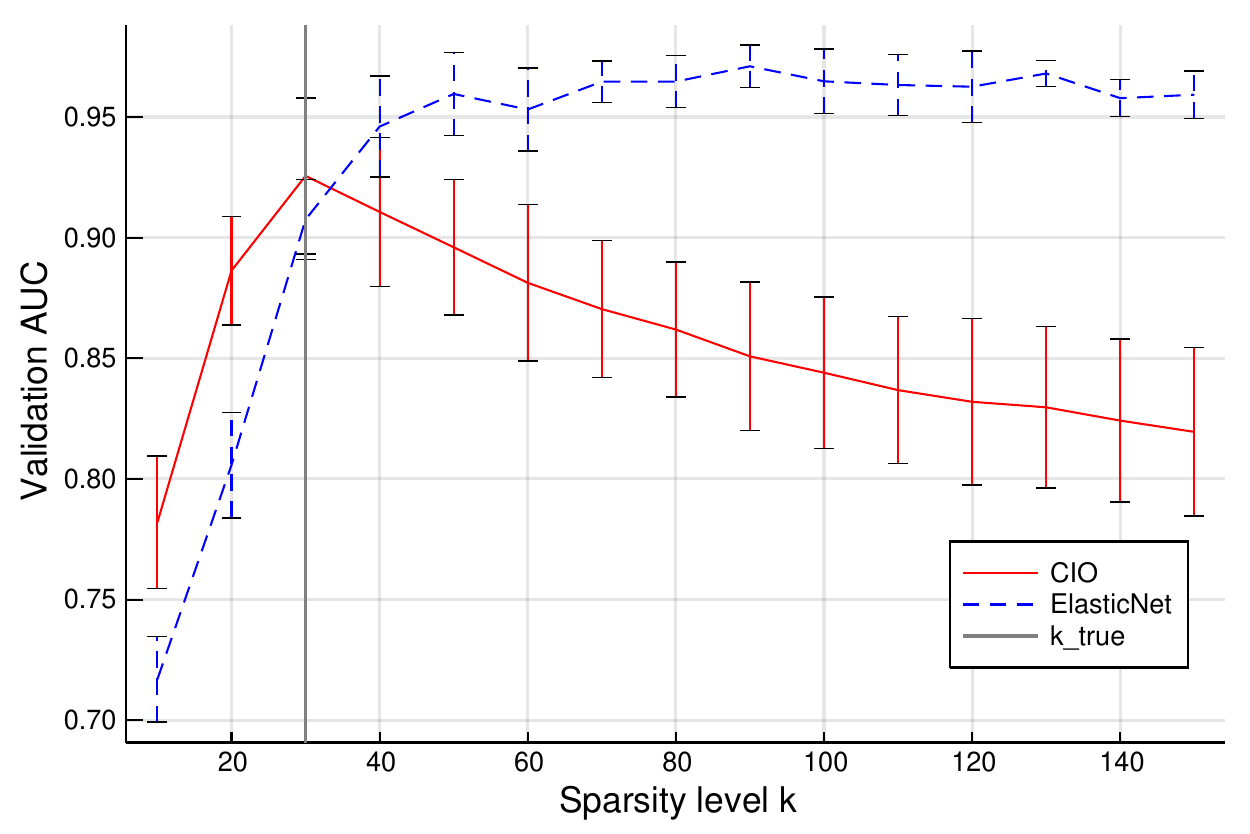}}
{Evolution of validation AUC as sparsity of the classifier $k$ increases, for ElasticNet (dashed blue) and sparse SVM (solid red). \label{fig:LRkcv1}}
{Results correspond to average values obtained over $10$ data sets with $n=700$, $p=1,000$, $k_{true}=30$ (black vertical line), $\rho=0.3$, $SNR \rightarrow \infty$ and increasing $k$ from $0$ to $100$.}
\end{figure*}
\begin{figure*}[h]
\FIGURE
{\includegraphics[width=.45\linewidth]{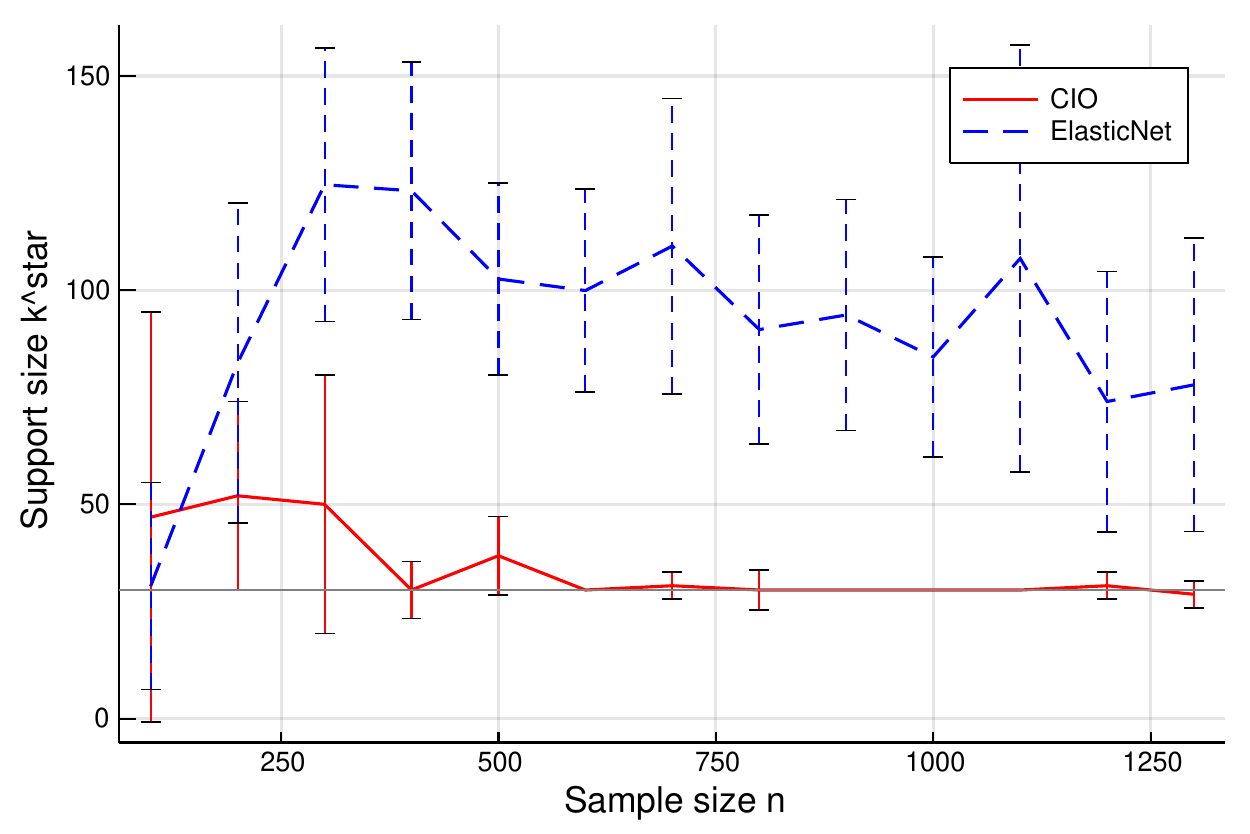}
\includegraphics[width=.45\linewidth]{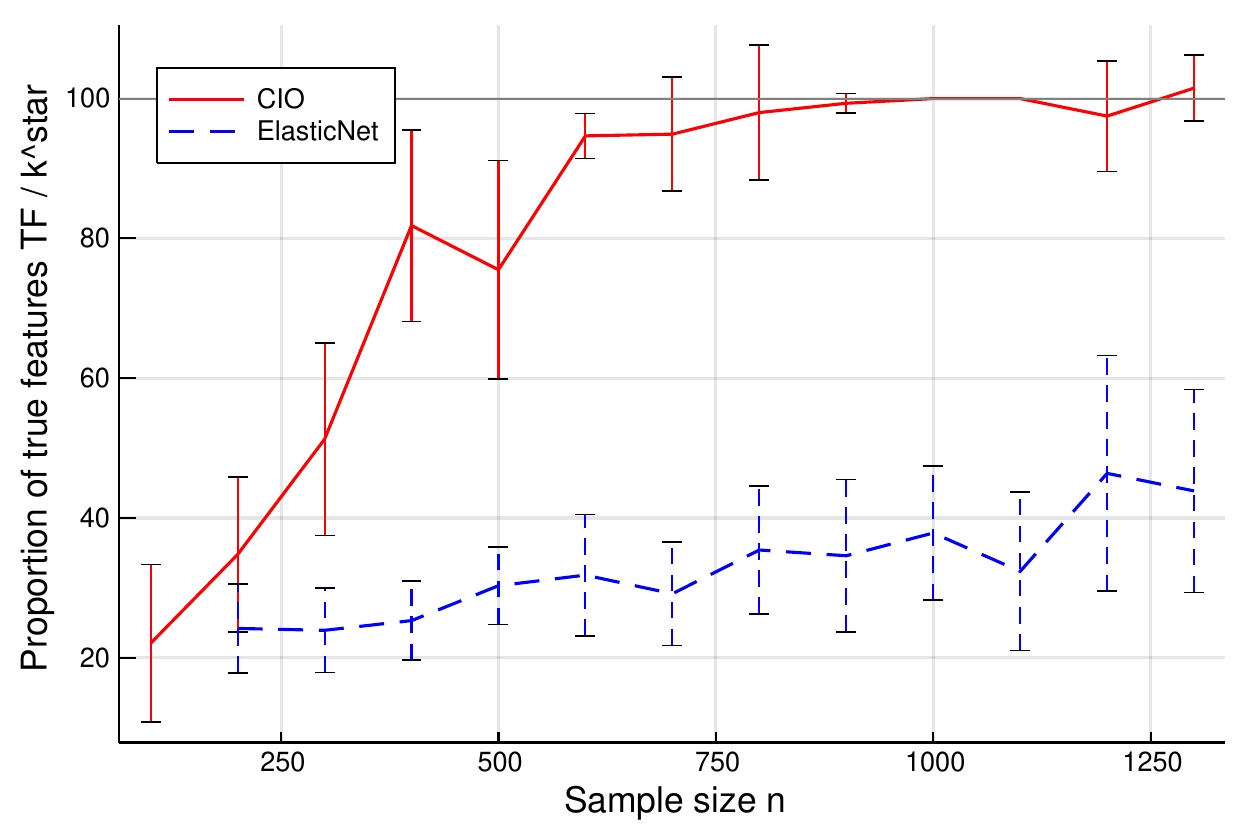}}
{Evolution of optimal sparsity $k^\star$ (left) and accuracy rate $A / k^\star$ (right) as sample size $n$ increases, for ElasticNet (dashed blue) and sparse SVM (solid red). \label{fig:LRkcv2}} 
{Results correspond to average values obtained over $10$ data sets with $p=1,000$, $k_{true}=30$, $\rho=0.3$, $SNR \rightarrow \infty$ and increasing $n$ from $100$ to $1,900$.}
\end{figure*}

{  Our findings are consistent with other comparisons of $\ell_0$ constrained estimators with Lasso-type regularizations. For instance, we refer to \citet{bertsimas2019sparse} for more extensive numerical comparisons, in regression and classification setting, under various regimes of noise and correlation.} 

\subsection{  Scalability on synthetic data} \label{ssec:exp.scale}
{  We use the opportunity of synthetic data to numerically assess how computational time is impacted by the size of the problem - as measured by the number of samples, $n$, and features, $p$ - and the 2 hyper parameters $\gamma$ and $k$. } 

{\bf Impact of sample size $n$: } As previously observed, computational time does not increase as $n$ increases. On the contrary, for a fixed value of $\gamma \times n$, computational time decreases as $n$ grows. As displayed on Figure \ref{fig:SVM.time} for Hinge loss, this phenomenon is observed for both the standard and stochastic version of the cutting-plane algorithm. Observe that the stochatic cutting plane slightly increases the number of cuts for low values of $n$, yet, since generating a cut is less expensive, computational time is reduced by a factor 2 for the hardest instances.  
On this regard, the stochastic version of the cutting-plane algorithm is more beneficial when using the logistic loss compared with the Hinge loss, as depicted on Figure \ref{fig:LogReg.time}. Indeed, number of cuts are reduced by a factor 5 and computational time by a factor 10. In our understanding, this substantial improvement in the logistic case is due to the fact that the relationship $\alpha_i \in \partial \ell(y_i, x_i^\top w)$ uniquely defines $\alpha$ and hence leads to higher quality cuts than in the case of the Hinge loss where $\ell$ is not differentiable at 0.
\begin{figure*}[h]
\FIGURE
{\includegraphics[width=.45\linewidth]{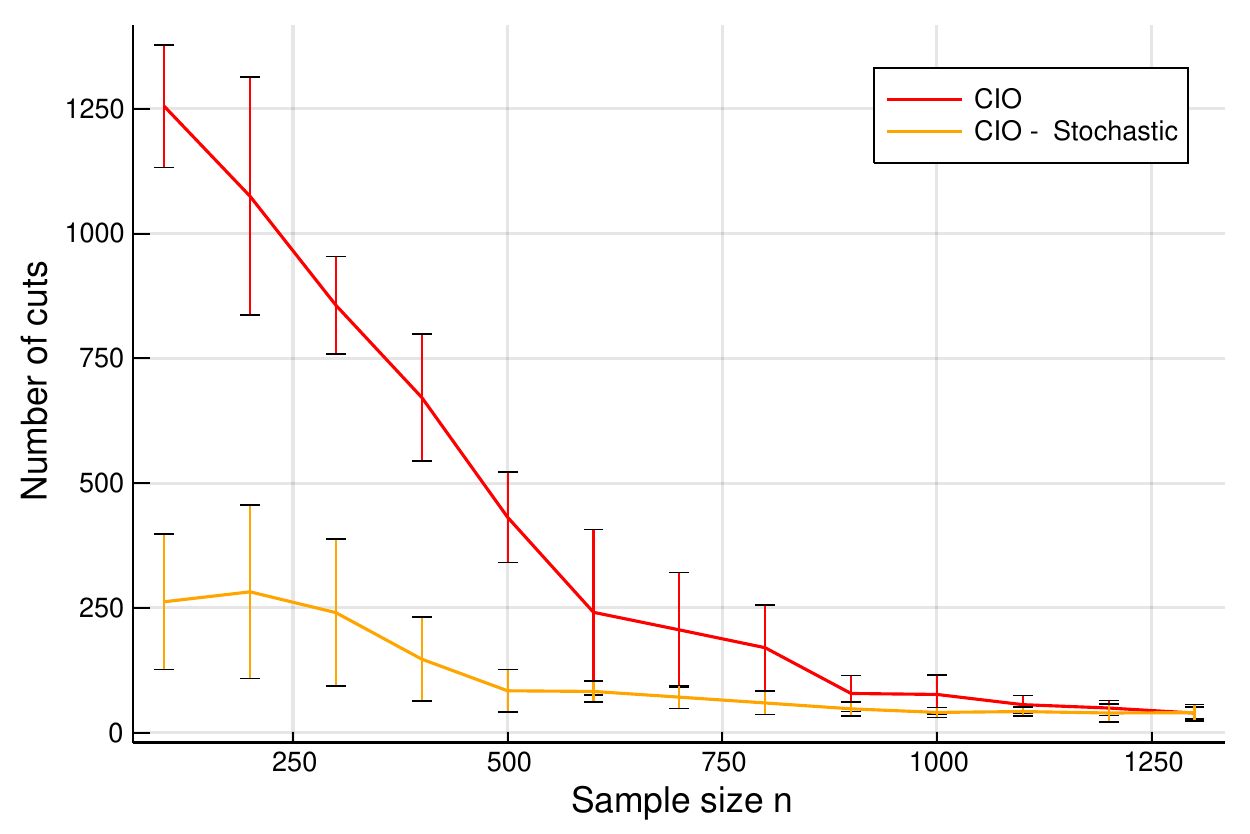}
\includegraphics[width=.45\linewidth]{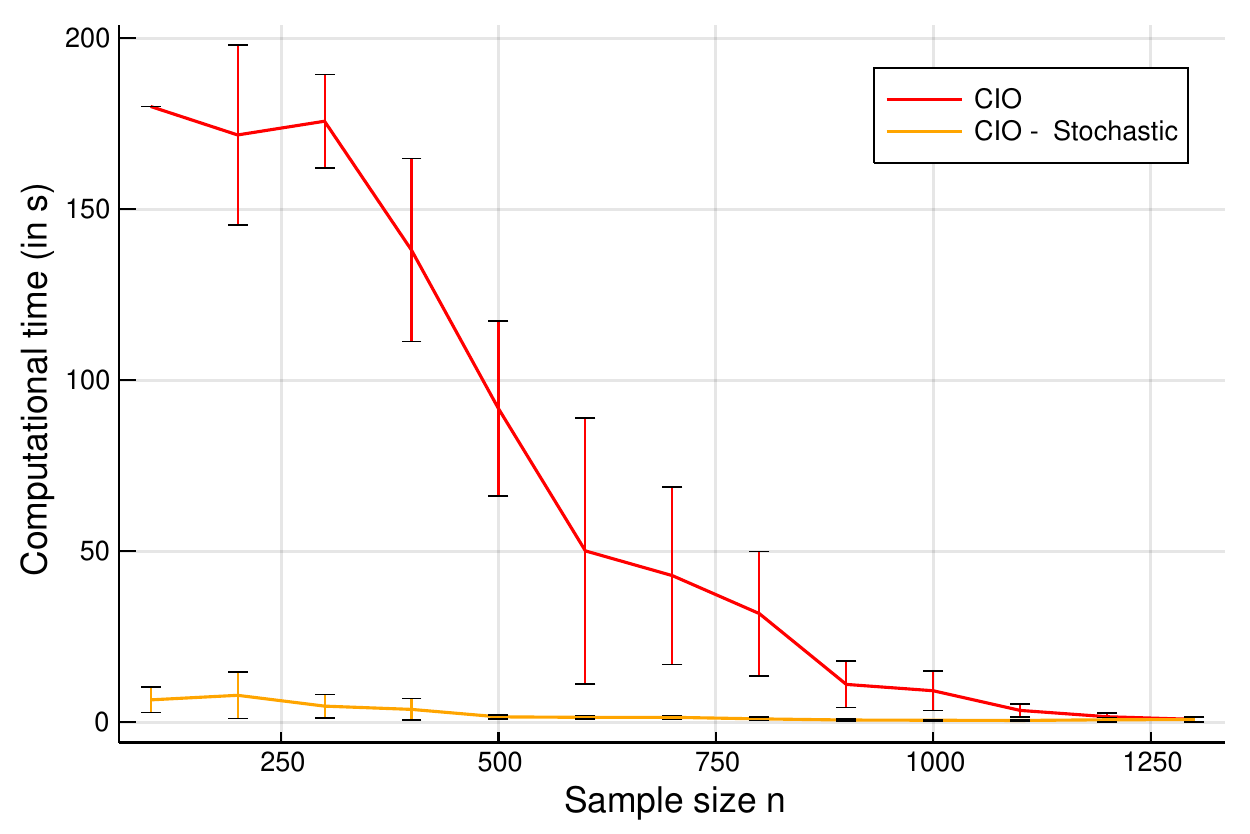} \label{fig:LogReg.time}}
{Evolution of the number of cuts (left panel) and computational time (right panel) required by the outer-approximation algorithm with Logistic loss as sample size $n$ increases.}
{Results correspond to average values obtained over $10$ data sets with $p=1,000$, $k_{true}=30$, $\rho=0.3$, $SNR \rightarrow \infty$, $\gamma = \gamma_0 / n$ with $\gamma_0 = 2^5 p / (k \max_i \|x_i \|^2)$ and increasing $n$ from $100$ to $1,300$. Algorithm \ref{OA} is initialized with the Lasso solution.}
\end{figure*}

{\bf Impact of regularization $\gamma$: } As $\gamma$ increases, however, computational time and the number of cuts required sharply increases, as depicted in Figure \ref{fig:time.gamma}. This phenomenon is consistently observed for all problem sizes and loss functions. Since the proper value of $\gamma$ is unknown a priori and needs to be cross-validated, this could result in prohibitive overall computational time. However, two aspects need to be kept in mind: First, computational time, here, corresponds to time needed to certify optimality, although high-quality solutions - or even the optimal solution - can be found much faster. Second, we do not implement any warm-starting strategy between instances with different values of $\gamma$ within the grid search. Smarter grid search procedures \citep{kenney2018efficient} could further accelerate computations. 
\begin{figure*}[h]
\FIGURE
{\includegraphics[width=.45\linewidth]{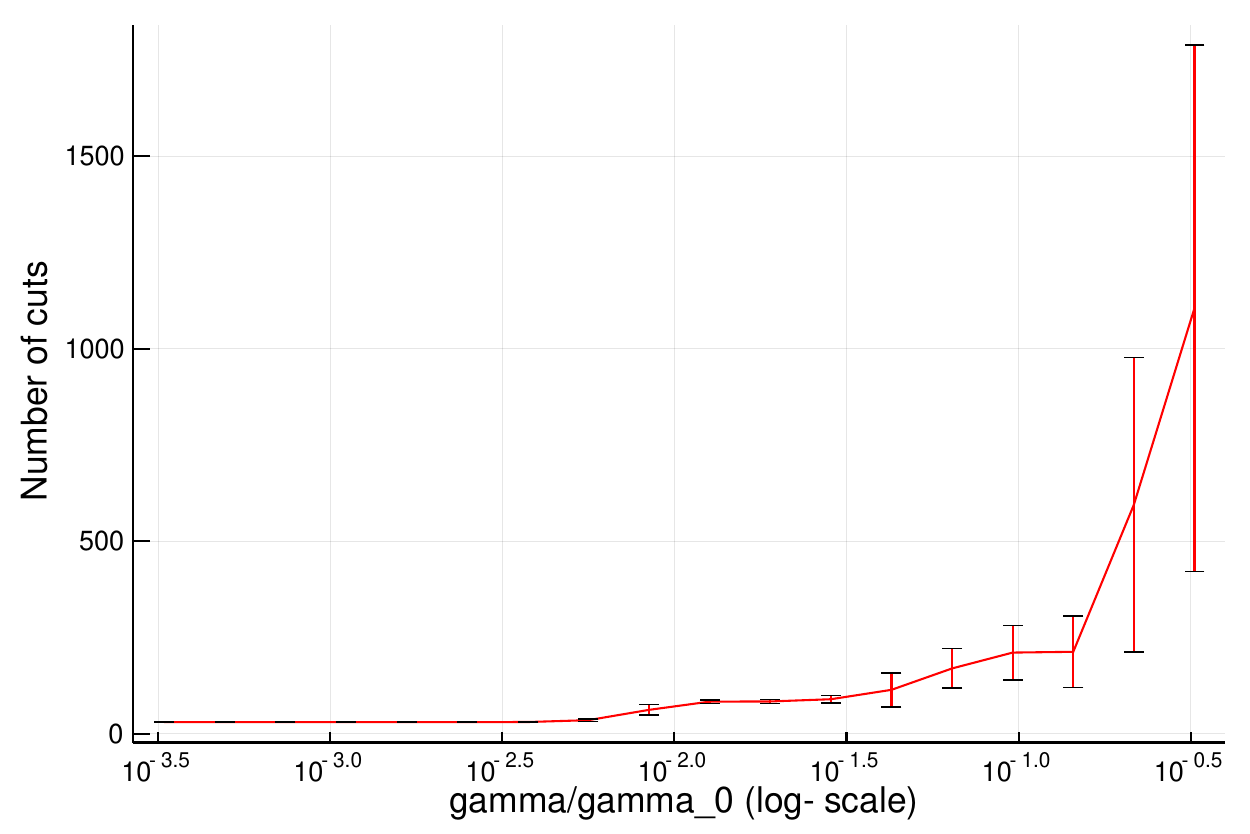}
\includegraphics[width=.45\linewidth]{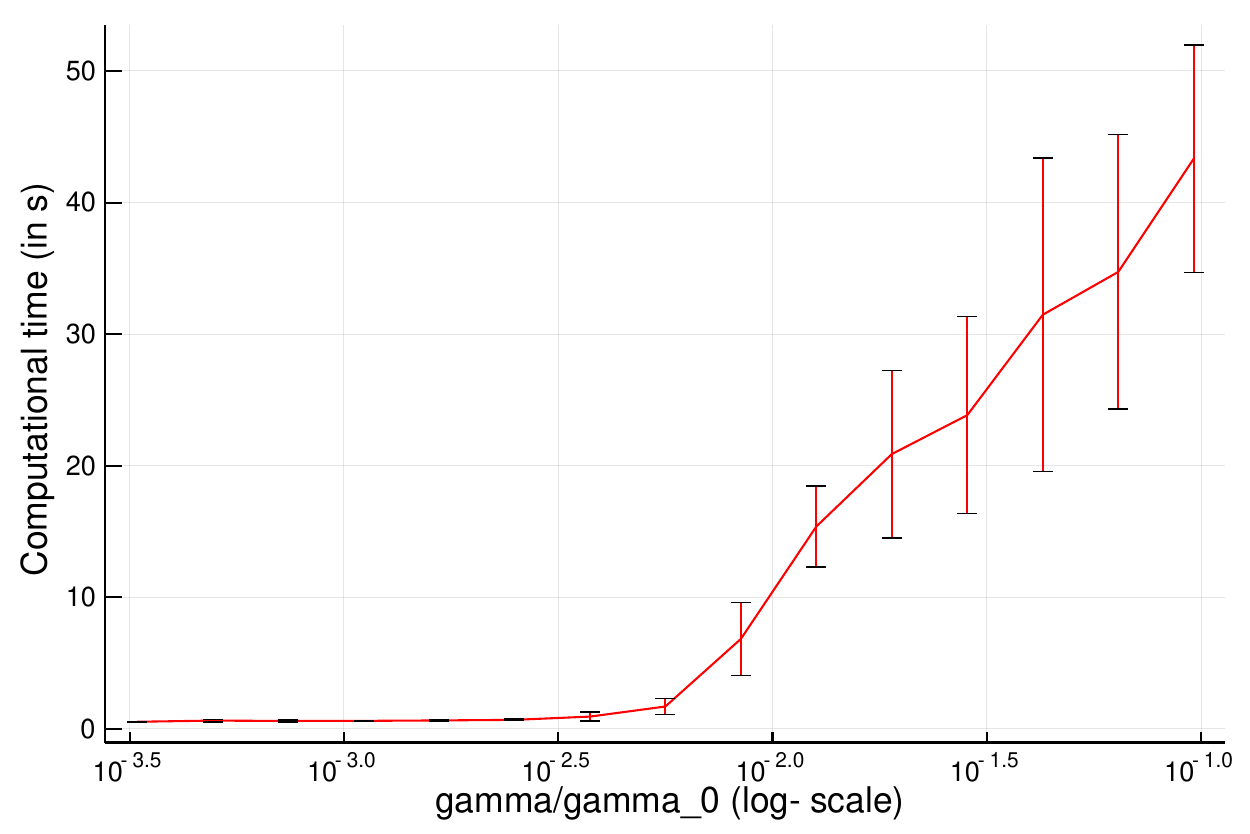}}
{Evolution of the number of cuts (left panel) and computational time (right panel) required by the outer-approximation algorithm with Hinge loss as sample size $\gamma$ increases. \label{fig:time.gamma}}
{Results correspond to average values obtained over $5$ data sets with $p=10,000$, $n=1,000$, $k=k_{true}=5$, $\rho=0$, $SNR =10$. The x-axis represents $\gamma / \gamma_0$ with $\gamma_0 = 10/n$. Algorithm \ref{OA} is initialized with the Lasso solution.}
\end{figure*}

{\bf Impact of feature size $p$ and sparsity level $k$: } The master problem \eqref{eqn:cio} obviously depends on the number of features $p$ and the sparsity level $k$ through the feasible set $\mathcal{S}_k^p$. As a first-order approximation, the combinatorial complexity of the problem can be captured by the total number of potential supports of size $k$, $\binom{p}{k}$. So one could expect a linear dependency on $p$ for $k$ fixed, and an exponential dependency on $k$, $p$ being fixed. This intuition is largely confirmed empirically, as reported in Table \ref{tab:time.p}: In low sparsity regimes ($k=5$), we can solve instances with up to $50,000$ features under a minute, while denser problems are not solved to optimality after 30 minutes even for $p=5,000$. Our experiments also highlight the benefits from having a good initialization solution $s_1$ and using the stochastic cut generation technique. As displayed in Table \ref{tab:time.p}, together, these enhancements reduce computational time up to a factor of $10$.
\begin{table}
\TABLE
{Computational time (in seconds) of Algorithm \ref{OA} for large $p$ and varying $k$. \label{tab:time.p}}
{\begin{tabular}{lccccc}
 \hfill $k=5$, $p=$ & $10,000$ & $20,000$ & $30,000$ & $40,0000$ & $50,000$ \\ \toprule
 Hinge loss (glmnet) & 18.2 & 29.4 & 29.3 & 30.7 & 148.7 \\
 Hinge loss (SubsetSelection) & 15.2 & 19.8 & 42.2 & 43.3 & 40.8 \\
 Hinge loss (SubsetSelection) Stochastic & 9.1 & 28.5 & 31.0 & 42.9 & 55.1 \\
 \\
  \hfill $p=5,000$, $k=$ & $1$ & $5$ & $10$ & $15$ &  $20$ \\ \toprule
Hinge loss (glmnet) & 0.6 & 7.6 & 569.3 & 702.6 & 1800 (40\%) \\
Hinge loss (SubsetSelection) & 0.2 & 7.5 & 203.0 & 537.0 & 1800 (46\%) \\
Hinge loss (SubsetSelection) Stochastic & 0.3 & 7.6 & 65.8  & 425.6 & 1800 (41\%) \\
\bottomrule
\end{tabular}
}{Instances are generated with $k=k_{true}$, $SNR = 10$, and $\gamma = 10/n$ as in \citep{dedieu2020learning}. We indicate in parentheses the warm-start method used. SubsetSelection refers to the Boolean relaxation of \eqref{eqn:sparse_reg_class} as implemented in \citet{bertsimas2019sparse}. If the algorithm did not converge within the given time budget (1800 seconds), we report optimality gap in parenthesis.}
\end{table}
\subsection{Experiments on real-world data sets}
We now {  illustrate the practical implications of sparse classification algorithms } on real-world data, of various size and dimensions. 
\subsubsection{Over-determined regime $n>p$} 
We consider data sets from the UC Irvine Machine Learning Repository (available at \url{https://archive.ics.uci.edu/ml/datasets.html}), split them into a training and a test set ($80\% / 20\%$), calibrate a classifier on the training data, using cross-validation to fit the hyper-parameters $k$ and $\gamma$ in the sparse case and $\lambda$ and $\alpha$ in the ElasticNet (Enet) case, and compare AUC on the test set for both methods. Characteristics of these data sets and experimental results are given in Table \ref{tab:LRuci_data}. Experiments clearly demonstrate that (a) our outer-approximation algorithm scales to data sets of practical relevance, (b) cardinality constrained formulations generally lead to sparser classifiers than ElasticNet (on 7 out 8 data sets) with comparable predictive power, with ElasticNet being more accurate on 3 out of 8 data sets. It suggests that features selected by our discrete optimization carry more relevant information that those obtained by ElasticNet. Yet, since these data sets contain a limited number of features $p$, they may not make a strong case for exact sparse classification methods compared to Lasso, the original problem being relatively sparse already. Therefore, we investigate the under-determined regime $n < p$ in the next section.

\begin{table}
\TABLE
{Comparative results of Lasso and sparse logistic regression on data sets from UCI ML Repository. \label{tab:LRuci_data}}
{
\begin{tabular}{lllcc|cc|cc}
& & & \multicolumn{2}{c}{ElasticNet} &\multicolumn{2}{c}{CIO - Hinge} & \multicolumn{2}{c}{CIO - Logistic} \\
Data set & $n$ & $p$ & $k$ & AUC & $k$ & AUC & $k$ & AUC \\
\midrule
Banknote Authentication & $1,372$ & $5$ & 
$4.0$ & $1.000$ &
$3.7$ & $1.000$ &
$\bf 3.2$ & $1.000$ \\
Breast Cancer & $683$ & $10$ & 
$7.8$ & $\bf 0.991$ & 
$\bf 5.4$ & $0.990$ & 
$5.5$ & $0.989$ \\
Breast Cancer (Diagnostic) & $569$ & $31$ & 
$21.1$ & $\bf 0.998$ & 
$12.0$ & $0.997$ & 
$\bf 11.4$ & $0.995$ \\
Chess (Rook vs. Pawn)& $3,196$ & $38$ &
$13.8$ & $0.867$ & 
$15.6$ & $0.862$ & 
$\bf 12.6$ & $\bf 0.868$ \\
Cylinder bands & $277$ & $484$ &
$\bf 98$ & $0.701$ & 
$111.0$ & $\bf 0.728$ & 
$139.0$ & $0.719$ \\
Magic Telescope & $19,020$ & $11$ &
$9.2$ & $0.840$ & 
$6.4$ & $0.843$ & 
$\bf 6.2$ & $\bf 0.844$  \\
QSAR Biodegradation & $1,055$ & $42$ & 
$35.8$ & $0.924$ & 
$\bf 18.5$ & $\bf 0.950$ &
$21.8$ & $0.947$ \\
Spambase & $4,601$ & $58$ & 
$56.4$ & $\bf 0.960$ & 
$27.2$ & $0.957$ &
$\bf 21.6$ & $0.954$ \\
\bottomrule
\end{tabular}
}
{AUC are computed out-of-sample on a test set comprised of $20\%$ of the initial data. Results are averaged over 10 different splits into train/validation data for cross-validation of the hyper-parameters.}
\end{table}

\subsubsection{Under-determined regime $p>n$}
Performance of sparse classification in the under-determined regime is crucial for two reasons: Since the amount of data available is limited, such regime favors estimators which can efficiently recover the truth even when the sample size $n$ is small with regard to $p$. More importantly, under-determined regimes occur in highly impactful applications, such as medical research. To show the direct implications of our method on this field of research, we used data from The Cancer Genome Atlas Research Network (available at \url{http://cancergenome.nih.gov}) on $n = 1,145$ lung cancer patients. Actually, tumor types often have distinct subtypes, each of them having its own genetic signature. In our sample for instance, $594$ patients ($51.9\%$) suffered from Adenocarcinoma while the remaining $551$ patients ($48.1\%$) suffered from Squamous Cell Carcinoma. The data set consists of gene expression data for $p=14,858$ genes  for each patient. We apply both sparse and Lasso classification to identify the most relevant genes to discriminate between the two subtypes and compile the results in Table \ref{tab:LRcancer}. The first conclusion to be drawn from our results is that the exact sparse classification problem scales to problems of such size, which is far above data sets usually encountered in the gene selection academic literature. In addition, explicitly constraining sparsity of the classifier leads to much sparser, thus more interpretable results with little compromise on the predictive power: Sparse SVM reaches an AUC of $0.977$ with only $38$ features while the $\ell_1$-regularized classifier selects ten times more genes for a $+0.005$ gain in AUC.
\begin{table}[h]
\TABLE
{Comparative results of $\ell_1$ regularized and $\ell_0$ constrained estimators on the Lung Cancer data. \label{tab:LRcancer}}
{
\begin{tabular}{lllcc|cc|cc}
& & & \multicolumn{2}{c}{ElasticNet} &\multicolumn{2}{c}{CIO - Hinge} & \multicolumn{2}{c}{CIO - Logistic} \\
Data set & $n$ & $p$ & $k$ & AUC & $k$ & AUC & $k$ & AUC \\
\midrule
Lung cancer & $1,145$ & $14,858$ & 
$378.4$ & $\bf 0.982$ &
$\bf 38$ & $0.977$ &
$90$ & $0.980$ 
\\
\bottomrule
\end{tabular}
}
{AUC are computed out-of-sample on a test set comprised of $20\%$ of the initial data. Results are averaged over 10 different splits into train/validation data for cross-validation of the hyper-parameters.}
\end{table}

\section{Towards a theoretical understanding of asymptotic support recovery}
\label{sec:theory}
As mentioned in the introduction, a large body of literature has provided information-theoretic limitations \citep{wainwright2009information,wang2010information,gamarnik17} or theoretical guarantees of support recovery by some specific algorithms \citep{wainwright2009sharp,pilanci2015sparse} for sparse linear regression, {  which supported the empirical observation of a phase transition \citep{donoho2006breakdown,bertsimas2016cio}. In classification, however, we did not  observe such a sharp phenomenon. In this section, we provide some intuition on the theoretical mechanisms involved for perfect support recovery in classification specifically. We prove an information-theoretic sufficient condition to achieve perfect support recovery and compare it with analogous bounds for linear regression, as well recent theoretical results on 1-bit compressed sensing \citep{jacques2013robust,scarlett2017limits}.}
\subsection{Notations and assumptions} 
To simplify the analysis, we consider a stylized framework where the data is generated according to the equation 
\begin{align*}
y_i = sign\left(x_i^\top w^{\star} + \varepsilon_i\right),    
\end{align*}
where $x_i$ are i.i.d. standard Gaussian random variables, $\varepsilon_i \sim \mathcal{N}(0,\sigma^2)$, $w^{\star} \in \mathbb{R}^p$ with $\|w^{\star}\|_0=k$ and $sign(\lambda) = 1$ if $\lambda >0$, $-1$, otherwise. Given a classifier $w$ predictions will be made according to the rule 
\begin{align*}
\hat y_i(w) = sign\left(x_i^\top w \right).
\end{align*}
It is obvious from the definition that for any $w \in \mathbb{R}^p$, $sign\left( \lambda x^\top w \right) = sign\left(x^\top w \right), \forall \lambda >0$. In other words, predictions made by a classifier are insensitive to scaling. As a consequence, the difference in prediction between two classifiers should demonstrate the same invariance and indeed only depends on the angle between the classifiers as formally stated in Lemma \ref{orthant} (proof in Appendix \ref{sec:theory_proof}) . This observation does not hold for $sign\left(x^\top w + \varepsilon \right)$, because of the presence of noise.
\begin{lemma}
\label{orthant}
Assume $x \sim \mathcal{N}(0,1)$ and $\varepsilon \sim \mathcal{N}(0,\sigma^2)$ are independent. Then, for any $w,w' \in \mathbb{R}^p$ we have that 
\begin{eqnarray}
\mathbb{P}\left( sign\left(x^\top w \right) \neq sign\left(x^\top w'+ \varepsilon \right) \right) &= \dfrac{1}{\pi} \arccos \left( \dfrac{w^\top w'}{\|w\|\sqrt{\|w'\|^2 + \sigma^2}} \right). \label{eq:lema}
\end{eqnarray}
\end{lemma}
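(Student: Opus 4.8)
The plan is to reduce the statement to a classical two-dimensional orthant computation. First I would introduce the scalar random variables $U := x^T w$ and $V := x^T w' + \varepsilon$, so that the event in \eqref{eq:lema} is exactly $\{\text{sign}(U) \neq \text{sign}(V)\}$. Since $x$ is a standard Gaussian vector in $\mathbb{R}^p$ and $\varepsilon \sim \mathcal{N}(0,\sigma^2)$ is independent of it, every linear functional of $(x,\varepsilon)$ is Gaussian, hence the pair $(U,V)$ is a centered jointly Gaussian vector in $\mathbb{R}^2$. The whole probability is therefore determined by the covariance structure of $(U,V)$.

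Second, I would compute the $2\times 2$ covariance matrix. Using $\mathbb{E}[xx^T]=I_p$ and the independence of $x$ and $\varepsilon$, one gets $\text{Var}(U)=\|w\|^2$, $\text{Var}(V)=\|w'\|^2+\sigma^2$, and $\text{Cov}(U,V)=w^T w'$, the cross term $\mathbb{E}[(x^T w)\varepsilon]$ vanishing by independence and zero mean. Hence the correlation coefficient of $(U,V)$ is precisely $\rho := \frac{w^T w'}{\|w\|\sqrt{\|w'\|^2+\sigma^2}}$, the argument of the $\arccos$ in the statement.

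Third, because the sign of a Gaussian is invariant under scaling by a positive constant, I may rescale $U$ and $V$ to unit variance without changing the event, which leaves a standard centered bivariate normal $(\tilde U,\tilde V)$ with correlation $\rho$; it remains to show $\mathbb{P}(\text{sign}(\tilde U)\neq\text{sign}(\tilde V))=\frac{1}{\pi}\arccos\rho$. The cleanest route is a rotational-invariance argument: realize $(\tilde U,\tilde V)$ as $(\langle g,u\rangle,\langle g,v\rangle)$ where $g\sim\mathcal{N}(0,I_2)$ and $u,v$ are unit vectors in $\mathbb{R}^2$ with $\langle u,v\rangle=\rho$, i.e. making angle $\theta=\arccos\rho$. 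The two signs disagree exactly when $g$ lies in the symmetric difference of the half-planes $\{\langle g,u\rangle>0\}$ and $\{\langle g,v\rangle>0\}$, which is the union of two opposite angular wedges of opening $\theta$ each. By the isotropy of $g$, the probability of landing in an angular sector equals its angle divided by $2\pi$, so the total probability is $2\theta/(2\pi)=\theta/\pi=\frac{1}{\pi}\arccos\rho$, and substituting the expression for $\rho$ yields \eqref{eq:lema}.

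The only delicate step is the last one: correctly identifying the disagreement region as two wedges of total angle $2\theta$ (rather than $2(\pi-\theta)$) and invoking isotropy. I would sanity-check it against the extremes $\rho=1,0,-1$, which force probabilities $0,\tfrac12,1$, matching $\arccos\rho/\pi$. If an analytic derivation is preferred over the geometric one, I could instead write the probability as $2\,\mathbb{P}(\tilde U>0,\tilde V<0)$ via the symmetry $(\tilde U,\tilde V)\mapsto(-\tilde U,-\tilde V)$ and invoke the classical orthant identity $\mathbb{P}(\tilde U>0,\tilde V>0)=\tfrac14+\tfrac{1}{2\pi}\arcsin\rho$, which simplifies to the same $\tfrac{1}{\pi}\arccos\rho$. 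The covariance computation is entirely routine, so essentially all the content of the lemma sits in this orthant probability.
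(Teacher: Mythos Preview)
Your proposal is correct and follows essentially the same route as the paper: reduce to the correlation $\rho = \dfrac{w^T w'}{\|w\|\sqrt{\|w'\|^2+\sigma^2}}$ of the standardized bivariate Gaussian pair and then compute the sign-disagreement probability, which the paper does by splitting into two orthants and invoking the classical formula $\mathbb{P}(n_1\geq 0,n_2\geq 0)=\tfrac14+\tfrac{1}{2\pi}\arcsin\rho$ from \citet{cramer2016mathematical}. Your primary geometric argument via isotropy is a self-contained derivation of that same identity rather than a different strategy, and the alternative you mention at the end is exactly the paper's computation.
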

We consider classifiers with binary entries $w^\star \in \{0,1\}^p$ only, similar to the work of \citet{gamarnik17} on sparse binary regression. Moreover, we learn the optimal classifier from the data by solving the minimization problem
\begin{equation}
\min_{w \in \{0,1\}^p} \dfrac{1}{n} \sum_{i=1}^n \textbf{1}\left(\hat y_i(w) \neq y_i \right) \mbox{ s.t. } \| w \|_0 = k,
\label{eqn:phi}
\end{equation}
where the loss function above corresponds to the empirical misclassification rate. Even though it is not a tractable loss function choice in practice, it demonstrates some interesting theoretical properties: it isolates the probabilistic model used to generate the data from the behavior of the optimal value. Indeed, for any classifier $w$, the empirical misclassification rate $\sum_{i=1}^n \textbf{1}\left(\hat y_i(w) \neq y_i \right)$ follows a Binomial distribution, as the covariate data are independent. In addition, the problem \eqref{eqn:phi} can be considered as the authentic formulation for binary classification, while other loss functions used in practice such as Hinge and logistic loss are only smooth proxies for the misclassification rate, used for their tractability and statistical consistency \citep{steinwart2002support,zhang2004statistical}. 

\subsection{Intuition and statement on sufficient conditions}
For a given binary classifier {  $w \in \{0,1\}^p$ of sparsity $k$}, the accuracy of the classifier (the number of true features it selects) is equal to the inner product of $w$ with $w^\star$:
\begin{align*}
A(w) = |\lbrace j : w_j \neq 0, w^\star_j \neq 0 \rbrace| = |\lbrace j : w_j =1, w^\star_j = 1 \rbrace| = \textstyle\sum_j w_j w^\star_j = w^\top w^\star.
\end{align*}
Consider a binary sparse classifier $w$, i.e., $\Vert w \Vert_0=k$, with accuracy $w^\top w^\star = \ell$. Then, it follows that the indicators $\textbf{1}\left(\hat y_i(w) \neq y_i \right)$ are distributed as independent Bernoulli random variable sharing the success parameter
\begin{align*}
  q (\ell;k,\sigma^2) & := \mathbb{P}\left( sign\left(x_i^\top w \right) \neq sign\left(x_i^\top w^\star+ \varepsilon_i \right) \right),\\
                      & \displaystyle = \dfrac{1}{\pi} \text{arccos}\left( \dfrac{\ell}{\sqrt{k(k+\sigma^2)}} \right).
\end{align*}
The success parameter $q_\ell=q (\ell;k,\sigma^2)$  can be checked to be a decreasing concave function of $\ell$. That is, the more accurate our binary classifier $w$, the smaller the probability of {  misclassification}. The previous should come as no surprise to anybody. The central limit theorem states that 
$$\sqrt{n} \left[ \dfrac{1}{n} \sum_{i=1}^n \textbf{1}\left(\hat y_i(w) \neq y_i \right) - q_l \right] \rightarrow \mathcal{N}\left(0, {q_\ell(1-q_\ell)}\right),$$ as $n \rightarrow \infty$. In words, asymptotically in $n$, a given classifier $w$ will have an empirical misclassification rate close to $q_\ell$. Since $q_\ell$ is decreasing in $\ell$, the truth $w^\star$ for which $\ell=k$ should minimize the misclassification error among all possible supports. As observed empirically, the number of true features selected corresponds to the true sparsity when $n$ is sufficiently large (see Figures \ref{fig:LRkfix1}). Intuitively, $n$ should be high enough such that the variance on the performance of each support $\tfrac{q_\ell(1-q_\ell)}{n}$ is small, taken into account that there are $\binom{k}{\ell} \binom{p-k}{k-\ell}$ possible supports with exactly $\ell$ correct features. In this case, it should be rather unlikely that the binary classifier with the smallest empirical misclassification rate is anything other than the ground truth $w^\star$. We will now make the previous intuitive argument more rigorous.

Because we aim at minimizing the misclassification rate, we are guaranteed to recover the true support $w^\star$ if there exists no other support $w$ with an empirical performance at least as good as the truth.
\begin{theorem}
\label{sufficient}
 
We assume the data is generated according to the equation $ y_i = sign\left(x_i^\top w^{\star} + \varepsilon_i\right)$, 
where $x_i$ are i.i.d. standard random variables, $\varepsilon_i \sim \mathcal{N}(0,\sigma^2)$, $w^{\star} \in \{0,1\}^p$ with $\|w^{\star}\|_0=k$. Given a classifier $w$, we denote $\hat y_i(w) := sign\left(x_i^\top w \right)$.
For any two binary classifiers $w_1$ and $w_2$, let $\Delta(w_1,w_2)$ denote the difference between their empirical performance, i.e.,
\begin{align*}
\Delta(w_1,w_2) := \dfrac{1}{n}\sum_{i=1}^n \textbf{1}\left(\hat y_i(w_1) \neq y_i \right) - \textbf{1}\left( \hat y_i(w_2) \neq y_i \right).
\end{align*}
Assume that $p \geqslant 2k$. Then there exist a threshold $n_0 >0$ such that for any $n > n_0$, 
\begin{align*}
  \mathbb{P}\left(\exists w \neq w^\star, \, w\in \{0,1\}^p, \, \|w\|_0=k \mbox{ s.t. }\Delta(w,w^\star) \leqslant 0 \right) \leqslant e^{-\frac{n-n_0}{2\pi^2 k (\sigma^2+2)}}.
\end{align*}
Moreover, we have $n_0 < C \left(2 + \sigma^2\right) k \log(p-k)$ for some absolute constant $C > 0$.  \\

In other words, if $n \geqslant C \left(2 + \sigma^2\right) k \log(p-k) + 2 \pi k (\sigma^2 +2) \log(1/\delta)$ for some $\delta \in (0,1)$, then   $$\mathbb{P}\left(\exists w \neq w^\star, \, w\in \{0,1\}^p, \, \|w\|_0=k \mbox{ s.t. }\Delta(w,w^\star) \leqslant 0 \right) \leqslant \delta.$$
\end{theorem} 
The proof of Theorem \ref{sufficient} is given in Appendix \ref{sec:theory_proof}. From a high-level perspective, our sufficient condition on $n$ relies on two ingredients:  (a) the union bound, which accounts for the log-dependence in $p-k$ and which is found in similar results for regression and signal processing \citep{wainwright2009information,wang2010information} and (b)  controlling  of the individual probability $\mathbb{P}\left(\Delta(w,w^\star) \leqslant 0 \right)$ using large deviation bounds, which depends on the size of $w$ and $w^\star$, $k$, and the noise  $\sigma^2$.

Before comparing the claim of Theorem \ref{sufficient} with similar results from statistics and signal processing, let us remember that $k$, the sparsity level of the true classifier $w^\star$, is assumed to be known. To put this assumption in perspective with our previous simulations, our statement only concerns the best achievable accuracy when $k$ is fixed. 
\subsection{Discussion}
For regression, \citet{gamarnik17} proved that support recovery was possible from an information-theoretic point of view if 
\begin{align*}
n > n^\star = \dfrac{2k \log p}{\log\left(\dfrac{2k}{\sigma^2}+1\right)}.
\end{align*}
Note that our threshold $n_0$ for classification does not vanish in the low noise $\sigma^2$ setting. This observation is not surprising: the output $y_i$ depending only on the sign of $w^\top x_i$ can be considered as inherently noisy. An observation already made by \citet{scarlett2017limits}.

As mentioned earlier, recent works in 1-bit compressed sensing have developed algorithms to recover sparse classifiers which provably recover the truth as the number of samples increases \citep{gupta2010sample,plan2013robust,plan2013one}. In particular, \citet{plan2013robust} formulate the problem of learning $w$ from the observations as a convex problem and establish bounds on the $\ell_2$ error $\| w-w^\star\|_2$, in the case of logistic regression. In particular, they show that $n > C' \, k \log(2p/k)$ is sufficient to achieve an $\ell_2$ reconstruction error which is bounded. {  In contrast with our result, they exhibited an algorithm able to achieve low error in terms of $\ell_2$ distance with fewer samples than what we proved to be an information-theoretic sufficient condition. Yet, this  does not trivialize  our result. $\ell_2$ consistency is a related but distinct criterion for support recovery. Even a good estimate of $w^\star$ in terms of $\ell_2$ distance might have a very different support. The fact that their rate for $\ell_2$ consistency is faster than the rate for support recovery suggests that achieving a good $\ell_2$ error is presumably an easier problem. Similar observations were  made in the case of linear regression in \citet{wainwright2009information} and intuitively explained: given a good support recovery procedure, one can restrict the number of features, use standard methods to estimate the values of the $w_j$'s and hope to achieve a good $\ell_2$ error, while the reverse might not be true.}

Finally, \citet{scarlett2017limits} proved similar sufficient conditions for support recovery in 1-bit compressed sensing and accompanied them with necessary conditions and constants $C >0$ as tight as possible. To that extent, their result (Corollary 3) might appear stronger than ours. However, their condition is valid only for low sparsity and low signal-to-noise regimes. Theorem \ref{sufficient}, on the other hand, remains valid for all values of $k$ and $\sigma$. In particular, it holds even if $k$ scales linearly in $p$ and $\sigma$ is low: a regime for which \citet{scarlett2017limits} provide necessary (Corollary 4) but no sufficient conditions. More precisely, they prove that perfect support recovery cannot be achieved if the number of samples is below a threshold scaling as $p \sqrt{\log p}$, while our bound scales as $p \log p$ in this regime. Combined together, there is  a $\sqrt{\log p}$ factor between necessary and sufficient conditions, which hints at the absence of a clear phase transition in this setting. As illustrated in Figure \ref{fig:theory.summary}, there is an intermediate sample size regime where support recovery is neither provably impossible nor achievable. Of course, this regime could be a deficiency of the proof techniques used, but in any case, we believe it constitutes an exciting direction for future research.

\section{Conclusion}
\label{sec:conclusion}
In this paper, we have proposed a tractable binary convex optimization algorithm for solving sparse classification. Though theoretically NP-hard, our algorithm scales for logistic regression and SVM  in problems  with $n,p$ in $10,000$s within minutes. {  We also introduce a stochastic version of the cut generation process which further reduces computational time by a factor 2 to 10 on numerical experiments.} Comparing our method and Lasso-based estimates, we observe empirically that  as $n$ increases, the number of true features selected by both methods  converges to the true sparsity. We support our observations with information-theoretic sufficient conditions, stating that support recovery is achievable as soon as $n > n_0$, with $n_0 < C \left(2 + \sigma^2\right) k \log(p-k)$ for some positive constant $C$. This sufficient information-theoretic condition echoes and complements existing results in the literature for 1-bit compressed sensing. Apart from accuracy, the exact sparse formulation has an edge over Lasso  in the number of false features: as $n$ increases, the number of false features selected by our method converges to zero, while this  is not observed for Lasso. This phenomenon is also observed for  classifying the type of  cancer using gene expression data from the Cancer Genome Atlas Research Network  with $n=1,145$  lung cancer patients and  $p=14,858$ genes.
Sparse classification using logistic and hinge loss returns a classifier based on $90$ and $38$ genes respectively compared with $378.4$ genes for ElasticNet  with  similar predictive accuracy. 


\newpage
\begin{APPENDICES}
\section{Proof of the sufficient condition for a support recovery} \label{sec:theory_proof}

\subsection{Preliminary results on orthant probabilities}
Let us recall an analytical expression for the probability that a bivariate normal distribution assigns to the positive orthant.

\begin{lemma} \citep[p.\ 290]{cramer2016mathematical}
\label{biv_orthant}
Assume we are given a zero mean bivariate normal random variable $(n_1, n_2)$ with $\mathbb E[n_1^2] = \mathbb E[n_2^2]=1$ and covariance $\rho_{12} = \mathbb E[n_1 n_2]$. Then,
\[
  \mathbb{P} \left( n_1 \geqslant 0, n_2 \geqslant 0 \right) = \frac{1}{2 \pi} \left( \frac{\pi}{2} + \arcsin(\rho_{12}) \right).
\]
\end{lemma}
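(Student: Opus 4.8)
The plan is to use the classical differentiation-with-respect-to-correlation technique (Plackett's identity, sometimes attributed to Price). Writing $\rho = \rho_{12}$, let
$$\phi(x,y;\rho) = \frac{1}{2\pi\sqrt{1-\rho^2}}\exp\left(-\frac{x^2 - 2\rho xy + y^2}{2(1-\rho^2)}\right)$$
denote the density of $(n_1,n_2)$, so that the quantity of interest is the function $P(\rho) := \mathbb{P}(n_1 \geq 0, n_2 \geq 0) = \int_0^\infty\!\!\int_0^\infty \phi(x,y;\rho)\,dx\,dy$, defined for $\rho \in (-1,1)$. Rather than evaluating this double integral directly, I would study how it varies with $\rho$ and recover $P$ by integrating a simple ordinary differential equation.

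The key technical ingredient is the identity $\partial_\rho \phi = \partial^2_{xy}\phi$, which I would verify by direct differentiation of the Gaussian density above. Equivalently, one can note that the characteristic function $\exp(-\tfrac12(t_1^2 + 2\rho t_1 t_2 + t_2^2))$ has $\rho$-derivative equal to $-t_1 t_2$ times itself, while multiplication by $-t_1 t_2$ in the Fourier domain corresponds exactly to applying $\partial^2_{xy}$ in the spatial domain, so the two sides agree. Granting this, I differentiate under the integral sign and apply the fundamental theorem of calculus twice, integrating first in $x$ and then in $y$, to obtain $\frac{dP}{d\rho} = \int_0^\infty\!\!\int_0^\infty \partial^2_{xy}\phi\,dx\,dy = \phi(0,0;\rho)$, where every boundary contribution at $x=\infty$ or $y=\infty$ vanishes because $\phi$ and its first derivatives decay to zero. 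Since $\phi(0,0;\rho) = \frac{1}{2\pi\sqrt{1-\rho^2}}$, this yields the elementary equation $P'(\rho) = \frac{1}{2\pi\sqrt{1-\rho^2}}$.

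It then remains only to integrate. Antidifferentiating gives $P(\rho) = \frac{1}{2\pi}\arcsin(\rho) + C$, and the constant is pinned down by the independent case $\rho = 0$: there $n_1$ and $n_2$ are independent standard normals, so $P(0) = \mathbb{P}(n_1 \geq 0)\,\mathbb{P}(n_2 \geq 0) = \tfrac14$, forcing $C = \tfrac14$. Hence $P(\rho) = \frac{1}{2\pi}(\tfrac{\pi}{2} + \arcsin(\rho))$, which is exactly the claimed formula.

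I expect the main obstacle to be the rigorous justification of the two analytic manoeuvres: verifying the partial differential identity $\partial_\rho\phi = \partial^2_{xy}\phi$ (a short but error-prone computation, best organized by tracking the quadratic form in the exponent and the normalizing factor $\tfrac{1}{2\pi\sqrt{1-\rho^2}}$ separately), and legitimizing the interchange of differentiation and integration together with the vanishing of the boundary terms (which follows from dominated convergence and Gaussian tail decay, uniformly for $\rho$ in compact subsets of $(-1,1)$). A purely calculational alternative — passing to polar coordinates and integrating the resulting angular density directly — is available but produces a messier integral, so I would prefer the differentiation route above.
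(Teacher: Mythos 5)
Your proof is correct, and it is worth noting that the paper itself contains no proof of this lemma at all: the statement is imported as a classical fact with a citation to Cram\'er (p.~290), so your argument is a self-contained addition rather than a parallel to anything in the text. Your route — Plackett's identity $\partial_\rho \phi = \partial^2_{xy}\phi$, differentiation under the integral to obtain $P'(\rho)=\phi(0,0;\rho)=\frac{1}{2\pi\sqrt{1-\rho^2}}$, then integration anchored at the independent case $P(0)=\frac14$ — is sound, your Fourier verification of the identity is correct with your sign conventions (since $\widehat{\partial^2_{xy}\phi} = -t_1t_2\,\hat\phi$), and your discussion of the two analytic caveats (interchange of $\partial_\rho$ and the integral via domination on compact subsets of $(-1,1)$, Gaussian decay killing the boundary terms) flags exactly the right issues. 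For comparison, there is an even shorter geometric proof in the spirit of classical treatments: write $n_1=u$, $n_2=\rho u+\sqrt{1-\rho^2}\,v$ with $u,v$ i.i.d.\ standard normal; the event $\{n_1\ge0,\ n_2\ge0\}$ is the intersection of two half-planes through the origin, i.e.\ a wedge of opening angle $\pi-\arccos\rho=\frac{\pi}{2}+\arcsin\rho$, and rotational invariance of the standard two-dimensional Gaussian gives the probability $\left(\frac{\pi}{2}+\arcsin\rho\right)/(2\pi)$ in one line. What your longer differentiation route buys is generality: the same Plackett-type reduction is the standard way to derive the trivariate orthant formula of Lemma~\ref{triv_orthant}, which this paper also relies on and which the wedge argument does not directly yield. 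One small completeness point: your ODE argument lives on $\rho\in(-1,1)$; the endpoint cases $\rho=\pm1$ follow by continuity of $P$ in $\rho$ (or directly, since there $n_2=\pm n_1$), and the formula checks out at both endpoints, giving $P=\frac12$ and $P=0$ respectively.
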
 
By continuity of the density function of normal distributions, the probability of the positive orthant and its interior are equivalent. Lemma \ref{orthant} is an almost direct consequence of the previous result. We give here its proof.
\vspace{1em}
\proof{Proof of Lemma \ref{orthant}}
We can separate the event of interest in two disjunct cases as
\begin{align*}
  & \mathbb{P}\left( sign\left(x^\top w \right) \neq sign\left(x^\top w' + \varepsilon  \right) \right)  \\
  & \qquad = \mathbb{P}\left( x^\top w \leqslant 0, x^\top w'+ \varepsilon  > 0 \right) + \mathbb{P}\left( x^\top w > 0, x^\top w' + \varepsilon  \leqslant 0 \right).
\end{align*}
Each term corresponds to the probability that a zero mean  bivariate normal variable $(x^\top w, x^\top w' +\varepsilon)$ realizes in an appropriate orthant. We define the random variables $n_1 := x^\top w/\norm{w}$ and $n_2 := (x^\top w' +\varepsilon)/\sqrt{\| w' \|^2 + \sigma^2}$ and obtain
\[
  \mathbb{P}\left( sign\left(x^\top w \right) \neq sign\left(x^\top w' + \varepsilon  \right) \right) = \mathbb P(n_1\leq 0, n_2>0) + \mathbb P(n_1>0, n_2\leq 0).
\]
We have that $\mathbb E[n_1^2]=1$, $\mathbb E[n_2^2]=1$ and $\rho_{12} =\mathbb E[n_1 n_2] = {w^\top w'}/{(\| w \| \sqrt{\| w' \|^2 + \sigma^2})}$.
Using the analytical expressions of such orthant probabilities for bivariate normal random variables given in Lemma \ref{biv_orthant}, we have hence
\(
\mathbb{P}\left( sign\left(x^\top w \right) \neq sign\left(x^\top w' + \varepsilon \right) \right) = \frac{1}{\pi} \left( \frac{\pi}{2} -  \arcsin(\rho_{12}) \right) = \frac{1}{\pi} \arccos(\rho_{12}).
\)
\hfill\halmos
\endproof

We will need a minor generalization of Lemma \ref{biv_orthant} to the three dimensional case in the proof of Theorem \ref{sufficient}. 
\begin{lemma} \citep[p. 290]{cramer2016mathematical}
\label{triv_orthant}
Assume we are given a zero mean trivariate normal random variable $(n_1, n_2, n_3)$ with $\mathbb E[n_1^2] = \mathbb E[n_2^2]= \mathbb E[n_3^2]=1$ among which we have covariances $\rho_{12} = \mathbb E[n_1 n_2]$, $\rho_{13} = \mathbb E[n_1 n_3]$ and $\rho_{23} = \mathbb E[n_2 n_3]$. Then,
\[
\mathbb{P} \left( n_1 \geqslant 0, n_2 \geqslant 0, n_3 \geqslant 0 \right) = \frac{1}{4 \pi} \left( \frac{\pi}{2} + \arcsin(\rho_{12}) + \arcsin(\rho_{13}) + \arcsin(\rho_{23}) \right).
\]
\end{lemma}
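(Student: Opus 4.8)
The plan is to prove the formula by Plackett's differentiation technique, which reduces the trivariate orthant integral to explicit one-dimensional integrals by differentiating with respect to the correlation coefficients. Write $P(\rho_{12},\rho_{13},\rho_{23}) := \mathbb{P}(n_1 \geq 0, n_2 \geq 0, n_3 \geq 0) = \int_0^\infty\!\int_0^\infty\!\int_0^\infty \phi(x)\,dx$, where $\phi$ is the density of the standard trivariate normal with correlation matrix $R$ having unit diagonal and off-diagonal entries $\rho_{12},\rho_{13},\rho_{23}$. The domain of definition is the open, convex, hence connected set of triples $(\rho_{12},\rho_{13},\rho_{23})$ for which $R$ is positive definite, and it contains the origin $(0,0,0)$, which corresponds to three independent variables.

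The central step is Plackett's identity $\partial \phi / \partial \rho_{ij} = \partial^2 \phi / (\partial x_i \partial x_j)$ for $i \neq j$, which I would establish by expressing $\phi$ through its Fourier transform $\exp(-\tfrac{1}{2} t^T R t)$ and comparing the effect of differentiating in $\rho_{ij}$ against multiplying by $-t_i t_j$, equivalently against differentiating twice in $x_i, x_j$. Differentiating $P$ under the integral sign with respect to $\rho_{23}$ and substituting this identity gives $\partial P / \partial \rho_{23} = \int_0^\infty\!\int_0^\infty\!\int_0^\infty \partial^2 \phi / (\partial x_2 \partial x_3)\, dx$. Carrying out the $x_2$ and $x_3$ integrations, the antiderivatives evaluate to boundary terms; the contributions at $+\infty$ vanish since $\phi$ decays, leaving only the lower endpoints and reducing the expression to the single integral $\int_0^\infty \phi(x_1, 0, 0)\, dx_1$.

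Next I would evaluate this integral explicitly. At $x_2 = x_3 = 0$ the quadratic form collapses to $x^T R^{-1} x = (R^{-1})_{11} x_1^2$, and a Gaussian integral over $x_1 \in [0,\infty)$ yields a factor $\tfrac{1}{2}\sqrt{2\pi / (R^{-1})_{11}}$. Using the cofactor formula $(R^{-1})_{11} = (1 - \rho_{23}^2)/\det R$, the determinant $\det R$ cancels against the normalizing constant of $\phi$, leaving the clean result $\partial P / \partial \rho_{23} = 1/(4\pi\sqrt{1 - \rho_{23}^2})$. The crucial feature is that this derivative depends on $\rho_{23}$ alone; by symmetry the analogous formulas hold for $\rho_{12}$ and $\rho_{13}$. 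Hence the gradient of $P - \tfrac{1}{4\pi}(\arcsin\rho_{12} + \arcsin\rho_{13} + \arcsin\rho_{23})$ vanishes identically on the connected positive-definite region, so this difference is constant. Evaluating at the origin, where independence gives $P = (1/2)^3 = 1/8 = \tfrac{1}{4\pi}\cdot\tfrac{\pi}{2}$ and all $\arcsin$ terms vanish, pins down the constant and yields the claimed identity.

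I expect the main obstacle to be the rigorous justification of the analytic manipulations rather than the algebra: establishing Plackett's identity cleanly, legitimizing differentiation under the integral sign through uniform integrability of the derivatives of $\phi$ on the orthant, and controlling the boundary terms in the iterated integration by parts. A secondary technical point is ensuring that $R$ stays positive definite throughout, so that $\phi$, $\det R$, and $(R^{-1})_{11}$ remain well defined; this is handled by working on the open convex positive-definite region and invoking its connectedness rather than integrating along an explicit path. Once these points are secured, the determinant cancellation makes the rest immediate, and the argument parallels exactly the two-dimensional computation underlying Lemma \ref{biv_orthant}.
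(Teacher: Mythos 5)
Your proof is correct, but be aware that the paper never proves Lemma \ref{triv_orthant} at all: the formula is imported as a classical result with a bare citation to \citet[p.~290]{cramer2016mathematical}, so there is no internal proof to compare against, and what you have written is a genuine self-contained derivation of a fact the paper only quotes. Your route is the classical one due to Plackett, and its two computational pivots check out: the Fourier-transform verification of the identity $\partial \phi/\partial \rho_{ij} = \partial^2 \phi/(\partial x_i \partial x_j)$ is sound (differentiating $\exp(-\tfrac{1}{2}t^T R t)$ in $\rho_{ij}$ picks up $-t_i t_j$ because $\rho_{ij}$ appears in two symmetric entries of $R$), and the boundary reduction together with the cofactor identity $(R^{-1})_{11} = (1-\rho_{23}^2)/\det R$ does cancel $\det R$ exactly, giving $\partial P/\partial \rho_{23} = 1/\left(4\pi\sqrt{1-\rho_{23}^2}\right)$, after which constancy of the difference on the open convex (hence connected) positive-definite region and evaluation at independence, $P(0,0,0) = 1/8 = \tfrac{1}{4\pi}\cdot\tfrac{\pi}{2}$, pin the identity down. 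Two remarks. First, the feature your argument exposes --- that each partial derivative depends on its own correlation alone --- is special to dimension at most three; in dimension four the analogous boundary term is itself a bivariate orthant probability in conditional correlations, which is precisely why closed forms of this type stop at the trivariate case. Second, there is an equally classical geometric alternative: writing $n = Lz$ with $z$ standard normal, $P$ is the normalized solid angle of a cone whose intersection with $S^2$ is a spherical triangle with dihedral angles $\tfrac{\pi}{2} + \arcsin \rho_{ij}$, and Girard's spherical-excess theorem gives the formula in one line, with Lemma \ref{biv_orthant} as the corresponding arc-length statement on $S^1$; that version trades your analytic justifications (differentiation under the integral, boundary terms) for elementary spherical geometry. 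Finally, note that the lemma as used in Lemma \ref{large_dev} involves only nondegenerate $R$ (since $\sigma^2 > 0$ and $w \neq w^\star$), so restricting your argument to the open positive-definite region costs nothing; if one wanted the semidefinite boundary too, it follows by continuity of both sides.
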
 


\subsection{Comparative performance of a given support with the truth}
We first prove a large deviation bound for $\mathbb{P}\left( \Delta(w,w^\star) \leqslant 0\right)$ for any given binary classifier $w$, depending on the number of true features it selects. The following result can be derived using Hoeffdings inequality as illustrated in its proof.
\begin{lemma} \label{large_dev}
Let $w\in\{0,1\}^p$ be a binary classifier such that $\| w\|_0 =k$ and $w^\top w^\star = \ell \in \{0,\ldots,k\}$. Its misclassification rate with respect to the ground truth satisfies the exponential bound
\begin{align*}
\mathbb{P}\left( \Delta(w,w^\star) \leqslant 0\right) &\leqslant \exp\left( -n \frac{(k-\ell)^2}{2 \pi^2 (k (k+\sigma^2)-\ell^2)}\right).
\end{align*}
\end{lemma}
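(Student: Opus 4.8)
The plan is to recognize $\Delta(w,w^\star)$ as a normalized sum of i.i.d.\ bounded increments and apply Hoeffding's inequality to its lower tail. Concretely, write $\Delta(w,w^\star) = \tfrac1n \sum_{i=1}^n Z_i$ with $Z_i := \mathbf{1}(\hat y_i(w)\neq y_i) - \mathbf{1}(\hat y_i(w^\star)\neq y_i)$. Because each pair $(x_i,\varepsilon_i)$ is drawn independently, the $Z_i$ are i.i.d., and since each takes values in $\{-1,0,1\}$ they are bounded in $[-1,1]$. First I would compute the common mean: by Lemma \ref{orthant} applied with $w'=w^\star$, the misclassification probability of $w$ is $q_\ell = \tfrac1\pi\arccos\!\big(\ell/\sqrt{k(k+\sigma^2)}\big)$, while applying it with $w=w'=w^\star$ gives that of the truth, $q_k = \tfrac1\pi\arccos\!\big(k/\sqrt{k(k+\sigma^2)}\big)$; hence $\mathbb{E}[Z_i] = q_\ell - q_k =: \mu$, which is nonnegative since $q$ decreases in $\ell$.

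With the mean in hand, Hoeffding's inequality for the $n$ independent summands of range $2$ yields
\[
  \mathbb{P}\big(\Delta(w,w^\star)\leqslant 0\big) = \mathbb{P}\Big(\tfrac1n\textstyle\sum_i (Z_i-\mu) \leqslant -\mu\Big) \leqslant \exp\!\Big(-\tfrac{n\mu^2}{2}\Big),
\]
so it remains only to show that the gap $\mu = q_\ell - q_k$ is at least $\tfrac{k-\ell}{\pi\sqrt{k(k+\sigma^2)-\ell^2}}$; squaring this lower bound and dividing by two reproduces exactly the exponent in the claim.

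The heart of the argument, and the step I expect to be the main obstacle, is this lower bound on the arccosine gap. Writing $a := \ell/\sqrt{k(k+\sigma^2)}$ and $b := k/\sqrt{k(k+\sigma^2)}$, so that $b - a = (k-\ell)/\sqrt{k(k+\sigma^2)}$, I would apply the mean value theorem to $\arccos$ on $[a,b]$: there is $\xi\in(a,b)$ with $\arccos(a)-\arccos(b) = (b-a)/\sqrt{1-\xi^2}$. Since $\ell\geqslant 0$ forces $a\geqslant 0$, we have $\xi > a \geqslant 0$, and because $x\mapsto 1/\sqrt{1-x^2}$ is increasing on $[0,1)$ this gives $\arccos(a)-\arccos(b)\geqslant (b-a)/\sqrt{1-a^2}$. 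Substituting $1-a^2 = (k(k+\sigma^2)-\ell^2)/(k(k+\sigma^2))$ collapses the $\sqrt{k(k+\sigma^2)}$ factors and produces precisely $(k-\ell)/\sqrt{k(k+\sigma^2)-\ell^2}$; dividing by $\pi$ gives the required bound on $\mu$, and plugging into the Hoeffding estimate finishes the proof. The only points demanding care are that the comparison point must be taken at $a$ rather than $b$ to get the inequality in the right direction (using $b$ would yield an upper, not lower, bound), and that the derivative of $\arccos$ stays finite on the open interval $(a,b)$, which holds because $\xi<1$ even in the boundary noiseless case $\sigma=0$ where $b=1$.
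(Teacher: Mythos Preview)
Your proof is correct and follows the same overall architecture as the paper (write $\Delta$ as an average of i.i.d.\ $Z_i\in\{-1,0,1\}$, lower-bound $\mathbb{E}[Z_i]$, then apply Hoeffding), but you streamline the computation of the mean in a way worth noting. The paper computes $\mathbb{P}(Z_i=1)$ and $\mathbb{P}(Z_i=-1)$ separately, which requires the \emph{trivariate} orthant formula (Lemma~\ref{triv_orthant}) because each event fixes the signs of $x_i^T w$, $x_i^T w^\star$, and $x_i^T w^\star+\varepsilon_i$ simultaneously; only after subtracting do the extra $\arcsin$ terms cancel to leave $q_\ell-q_k$. You bypass this entirely by linearity of expectation, observing that $\mathbb{E}[Z_i]=\mathbb{P}(\hat y_i(w)\neq y_i)-\mathbb{P}(\hat y_i(w^\star)\neq y_i)$ and reading each term off the \emph{bivariate} Lemma~\ref{orthant}. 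This is genuinely simpler and makes the trivariate lemma unnecessary for the present result. For the $\arccos$ gap, your mean-value-theorem argument and the paper's tangent-line (concavity) argument are two phrasings of the same inequality $\arccos(a)-\arccos(b)\geq (b-a)/\sqrt{1-a^2}$; your remark that the MVT point $\xi$ stays in the open interval, so the derivative remains finite even when $\sigma=0$ and $b=1$, is exactly the care needed there.
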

\proof{Proof of Lemma \ref{large_dev}}
Let us consider a binary classifier $w\in\{0,1\}^p$ with sparsity $\| w\|_0 =k$ and true features $w^\top w^\star = \ell$. We compare the empirical misclassification rate of $w$ with the performance of the true support $w^\star$. We take the misclassification rate with respect to the ground truth $w^\star$ as
\begin{align*}
\Delta(w,w^\star) = \dfrac{1}{n}\sum_{i=1}^n \textbf{1}\left(\hat y_i(w) \neq y_i \right) - \textbf{1}\left( \hat y_i(w^\star) \neq y_i \right) =: \dfrac{1}{n}\sum_{i=1}^n Z_i
\end{align*}
which is composed of the sum of independent random variables $Z_i$ taking values in $\{-1, 0, 1\}$ such that 

\begin{align*}
Z_i= \begin{cases} +1, &\mbox{ if } y_i  = \hat y_i(w^\star) \neq \hat y_i(w), \\
-1, &\mbox{ if } y_i  = \hat y_i(w) \neq \hat y_i(w^\star), \\
0, &\mbox{ otherwise}
\end{cases}
\end{align*}
Each random variable $y_i$, $\hat y_i(w^\star)$, $\hat y_i(w)$ is the sign of the normally distributed quantities $x_i^\top w^\star + \varepsilon_i$, $x_i^\top w^\star$ and $x_i^\top w$ respectively. Let us define three zero mean random variables $n_1 = (x_i^\top w + e_i)/\sqrt{k + \sigma^2}$, $n_2 = x_i^\top w / \sqrt{k}$ and $n_3 = x_i^\top w^\star/\sqrt{k}$. Their covariance structure is characterized as $\rho_{12}=\mathbb E[n_1 n_2] = k/\sqrt{k(k+\sigma^2)}$, $\rho_{23}=\mathbb E[n_2 n_3] = \ell/k$ and $\rho_{13}=\mathbb E[n_1 n_3] = \ell/\sqrt{k(k+\sigma^2)}$.
We can then express the probabilities of each value of $Z_i$ as tridimensional orthant probabilities for these three zero mean correlated normal random variables  and use the analytical expression given in Lemma \ref{triv_orthant}. We hence arrive at
\begin{align*}
  \mathbb{P} \left( Z_i=1 \right) & =\mathbb{P}\!\left( x_i^\top w^\star\!+\! \epsilon_i\geq 0 , x_i^\top w \leqslant 0, x_i^\top w^\star \geqslant 0 \right) \!+\! \mathbb{P}\!\left( x_i^\top w^\star \!+\! \epsilon_i\leq 0 , x_i^\top w \geqslant 0, x_i^\top w^\star \leqslant 0 \right) \\
  &= \mathbb{P} \left( n_1\geq 0, n_2\leq 0, n_3 \geqslant 0 \right) +  \mathbb{P} \left( n_1 \leqslant 0, n_2\geq 0, n_3\leq 0 \right) \\ 
                                  &= \dfrac{1}{2 \pi} \left[ \dfrac{\pi}{2} - \arcsin\left(\dfrac{\ell}{\sqrt{k(k+\sigma^2)}}\right) + \arcsin\left(\dfrac{k}{\sqrt{k(k+\sigma^2)}}\right) - \arcsin\left(\dfrac{\ell}{k}\right) \right], \\
  \intertext{and equivalently}
  \mathbb{P} \left( Z_i=-1 \right) & = \mathbb{P}\!\left( x_i^\top w^\star\!+\! \epsilon_i\geq 0 , x_i^\top w \geqslant 0, x_i^\top w^\star \leqslant 0 \right) \!+\! \mathbb{P}\!\left( x_i^\top w^\star \!+\! \epsilon_i\leq 0 , x_i^\top w \leqslant 0, x_i^\top w^\star \geqslant 0 \right) \\
                                  &= \mathbb{P} \left( n_1\geq 0, n_2\geq 0, n_3\leq 0 \right) +  \mathbb{P} \left( n_1\leq 0, n_2\leq 0, n_3\geq 0 \right) \\
&= \dfrac{1}{2 \pi} \left[ \dfrac{\pi}{2} + \arcsin\left(\dfrac{\ell}{\sqrt{k(k+\sigma^2)}}\right) - \arcsin\left(\dfrac{k}{\sqrt{k(k+\sigma^2)}}\right) - \arcsin\left(\dfrac{\ell}{k}\right) \right]. 
\end{align*}
Evidently, we can characterize the probability of $Z_i=0$ as $\mathbb P(Z_i=0) = 1 - \mathbb P(Z_i=1) - \mathbb P(Z_i=-1)$. The mean of $Z_i$ is now easily found as the expression
\begin{align*}
\mathbb{E}[Z_i] &= \dfrac{1}{\pi} \left[\arccos\left(\dfrac{\ell}{\sqrt{k(k+\sigma^2)}}\right) - \arccos\left(\dfrac{k}{\sqrt{k(k+\sigma^2)}}\right)\right].\\
\end{align*}
Concavity of the arccos function on the interval $[0, 1]$ enables us to state the gradient inequalities
\begin{align*}
\arccos\left(\dfrac{k}{\sqrt{k(k+\sigma^2)}}\right) &\leqslant \arccos\left(\dfrac{\ell}{\sqrt{k(k+\sigma^2)}}\right) + \arccos^\prime \left(\dfrac{\ell}{\sqrt{k(k+\sigma^2)}}\right) \dfrac{k-\ell}{\sqrt{k(k+\sigma^2)}} \\
&= \arccos\left(\dfrac{\ell}{\sqrt{k(k+\sigma^2)}}\right) - \sqrt{\dfrac{k(k+\sigma^2)}{k(k+\sigma^2) - \ell^2}} \dfrac{k-\ell}{\sqrt{k(k+\sigma^2)}} \\
&= \arccos\left(\dfrac{\ell}{\sqrt{k(k+\sigma^2)}}\right) - \dfrac{k-\ell}{\sqrt{k(k+\sigma^2) - \ell^2}}.
\end{align*}
We thus obtain a somewhat simple lower bound on the mean of $Z_i$
\begin{align*}
\mathbb{E}[Z_i] &\geqslant \dfrac{1}{\pi} \dfrac{k-\ell}{\sqrt{k(k+\sigma^2)-\ell^2}}.
\end{align*}
We now have all the ingredients to upper-bound the probability that $w$ performs strictly better than $w^\star$, in other words that $\Delta(w,w^\star) := \sum_{i=1}^n Z_i < 0$. Applying Hoeffding's inequality for independent random variables supported on $[-1, 1]$, we have for any $t >0$
\begin{align*}
\mathbb{P}\left( \sum_{i=1}^n (Z_i - \mathbb{E}[Z_i] ) < -n t\right) &\leqslant \exp \left( - \dfrac{n t^2}{2}\right), \end{align*}
and taking $t = \mathbb{E}[Z]$, which is non negative for $\ell < k$ because arccos is decreasing on $[0,1]$, leads to
\(
\mathbb{P}\left( \Delta(w,w^\star) < 0\right) \leqslant \exp \left( - \frac{n}{2} \mathbb{E}[Z_i]^2 \right).
\)
Substituting in the previous expression our lower bound for the mean $\mathbb E[Z_i]$ gives the desired result. 
\hfill\halmos
\endproof

{ \paragraph{Remark:} In the absence of noise ($\sigma=0$), the bound in Lemma \ref{large_dev} can be improved. Indeed, the truth makes no mistakes ($\hat y_i(w^\star) = y_i, \;  \forall i$) and $\Delta(w,w^\star) \geqslant 0$ for any $w$. More precisely, here $\sum_{i=1}^n \textbf{1}\left(\hat y_i(w) \neq y_i \right)$ is a binomial random variable with parameters $n$ and $q(\ell; k, 0)$. Therefore, 
 \begin{align*}
 \mathbb{P} \left( \Delta(w,w^\star) \leqslant 0 \right) = \mathbb{P} \left( \Delta(w,w^\star) = 0 \right) = \left( 1 - \dfrac{1}{\pi}  \arccos\left( \tfrac{\ell}{k} \right) \right)^n. 
 \end{align*}
Using concavity of the logarithm and the arccos function successively yields the upper bound
\[ \mathbb{P}\left( \Delta(w,w^\star) < 0\right) \leqslant \exp \left( - \dfrac{n}{\pi} \sqrt{\dfrac{k-\ell}{k+\ell}} \right). \]
However, such a refinement eventually modifies the result in Theorem \ref{sufficient} by a constant multiplicative factor only.}

\subsection{Proof Theorem \ref{sufficient}}
\proof{Proof of Theorem \ref{sufficient}}
We are interested in bounding the probability that the binary classifier with minimal empirical misclassification rate is any other than $w^\star$. We can characterize the probability of such event as $\mathbb{P}\left(\exists w \neq w^\star \mbox{ s.t. } \Delta(w,w^\star) \leqslant 0 \right)$. Evidently,
  \[
    \mathbb{P}\left(\exists w \neq w^\star \mbox{ s.t. } \Delta(w,w^\star) \leqslant 0 \right) = \textstyle \sum_{\ell \in \{0, \dots, k-1\}} \mathbb{P}\left(\exists w \mbox{ s.t. } w^\top w^\star = \ell, ~ \Delta(w,w^\star) \leqslant 0 \right).
  \]
Recall that there are exactly $\binom{k}{\ell} \binom{p-k}{k-l}$ distinct binary classifiers $w$ with accuracy $w^\top w^\star = \ell$.
Combining a union bound and the bound from Lemma \ref{large_dev} yields
\begin{align}
  \mathbb{P}\left(\exists w \neq w^\star \mbox{ s.t. } \Delta(w,w^\star) \leqslant 0 \right) 
  &\leqslant \sum_{\ell=0}^{k-1} \binom{k}{\ell} \binom{p-k}{k-\ell} \exp\left(  -n \frac{(k-\ell)^2}{2 \pi^2 (k (k+\sigma^2)-\ell^2)} \right), \\
  & \label{eq:ineq} \leqslant k  \cdot \max_{\ell=0,...,k-1} \binom{k}{\ell} \binom{p-k}{k-\ell} \exp\left( -n \frac{(k-\ell)^2}{2 \pi^2 (k (k+\sigma^2)-\ell^2)}\right).
\end{align}
In order for the previous error probability to be bounded away from one, it suffice to take $n$ greater than a threshold $T$
\begin{align*}
n > T : = \max_{\ell=0,...,k-1} \frac{2 \pi^2 (k (k+\sigma^2)-\ell^2)}{(k-\ell)^2} \left[ \log k + \log \binom{k}{\ell} + \log \binom{p-k}{k-\ell}\right].
\end{align*}
We can obtain a more interpretable sufficient condition by upper-bounding the threshold $T$. Assuming $p \geqslant 2 k$, $\binom{k}{\ell} = \binom{k}{k-\ell} \leqslant \binom{p-k}{k-\ell}$ and $k \leqslant \binom{p-k}{k-\ell}$, so that
\begin{align*}
T &\lesssim \max_{\ell=0,...,k-1} \frac{k (k+\sigma^2)-\ell^2}{(k-\ell)^2} \log \binom{p-k}{k-\ell},
\end{align*}
where $\lesssim$ signifies that the inequality holds up to a multiplicative factor. {  Since $\log \binom{p-k}{k-\ell} \lesssim (k-\ell) \log \left(\dfrac{p-k}{k-\ell}\right)$}, we now have
\begin{align*}
  T & \lesssim \max_{\ell=0,...,k-1} \frac{k (k+\sigma^2)-\ell^2}{(k-\ell)} \log \left(\dfrac{p-k}{k-\ell}\right), \\
\end{align*}
The maximum over $\ell$ in right hand side of the previous inequality occurs when $\ell = k -1$. The previous observation yields hence that $T  \lesssim (2+\sigma^2) k  \log (p-k)$. Finally, it is easy to verify that when $n$ exceeds some threshold value $n_0 \geqslant T$, the inequality \eqref{eq:ineq} yields
\begin{align*}
  \mathbb{P}\left(\exists w \neq w^\star \mbox{ s.t. } \Delta(w,w^\star) \leqslant 0 \right) & \leqslant \max_{\ell=0,...,k-1} \exp\left( - \frac{(k-\ell)^2}{2 \pi^2 (k (k+\sigma^2)-\ell^2)} (n-n_0)\right), \\
  & \leqslant \exp\left( -\frac{n - n_0}{2 \pi^2 k (2+\sigma^2)}\right).
\end{align*}
\hfill\halmos
\endproof
\end{APPENDICES}

\newpage
\footnotesize
\bibliographystyle{informs2014}
\bibliography{biblio}
\end{document}